\begin{document}

\title[\hfilneg \hfil Multi-term time-fractional stochastic control system]{ Optimal controllability for multi-term time-fractional stochastic systems with non-instantaneous impulses }
\author[A. Afreen, A. Raheem  \&  A. Khatoon \hfil \hfilneg]
{A. Afreen$^{*}$, A. Raheem \& A. Khatoon}

\address{Asma Afreen \newline Department of Mathematics,
	Aligarh Muslim University,\newline Aligarh -
	202002, India.} \email{afreen.asma52@gmail.com}

\address{Abdur Raheem \newline
	Department  of Mathematics,
	Aligarh Muslim University,\newline Aligarh -
	202002, India.} \email{araheem.iitk3239@gmail.com}

\address{Areefa Khatoon \newline
	Department  of Mathematics,
	Aligarh Muslim University,\newline Aligarh -
	202002, India.} \email{areefakhatoon@gmail.com}

\renewcommand{\thefootnote}{} \footnote{$^*$ Corresponding author:
	\url{ A. Afreen (afreen.asma52@gmail.com)}}

\subjclass[2010]{34K30, 93E20, 34A08, 34K45, 34K50, 65C30}

\keywords{Caputo fractional derivative, multi-term time-fractional system, stochastic system, existence of mild solution, optimal control}

\begin{abstract}
In the present paper, we study the existence and optimal controllability of a multi-term time-fractional stochastic system with non-instantaneous impulses. Using semigroup theory, stochastic analysis theory, and Krasnoselskii's fixed point theorem, we first establish the existence of a mild solution. Further, we obtain that there exists an optimal state-control pair of the system.  Some examples are given to illustrate the abstract results. 
\end{abstract}

\maketitle \numberwithin{equation}{section}
\newtheorem{theorem}{Theorem}[section]
\newtheorem{lemma}[theorem]{Lemma}
\newtheorem{proposition}[theorem]{Proposition}
\newtheorem{corollary}[theorem]{Corollary}
\newtheorem{remark}[theorem]{Remark}
\newtheorem{definition}[theorem]{Definition}
\newtheorem{example}[theorem]{Example}
\allowdisplaybreaks

\section{\textbf{Introduction}} 

The theory of fractional-order differential systems has received significant attention in recent decades due to remarkable applications in various fields of science and engineering, including medical models, electrical engineering, and so on. Fractional differential and integral operators have proven to be extremely useful due to nonlocal characteristics. Moreover, the fractional differential equations of arbitrary
order can describe the dynamical behavior of real-life phenomena more precisely. The importance of a thorough understanding of fractional calculus has increasingly been recognized. For more information, refer to \cite{a11,a3,i1,k3}.

It is well-known that many real-life phenomena are affected by the sudden change in their state at certain moments, such as heartbeats and blood flow in the human body. These phenomena are discussed as impulsive differential equations. In general, there are two types of impulsive effects. The first type is instantaneous impulsive differential equations, i.e., the duration of these changes is negligible compared to the whole process. The second type is non-instantaneous impulsive differential equations, i.e., the impulsive action starts at fixed points and remains active on a finite time interval. The study of impulsive systems is crucial to analyzing more realistic mathematical models. It has numerous applications in various fields such as drug diffusion in the human body, population dynamics, theoretical physics, mathematical economy, chemical technology, industrial robotics, engineering, etc. For more details, refer to \cite{6,1,2,v2}. 

Many real-world phenomena, such as population growth, stock prices, weather prediction model, and heat conduction in materials with memory, are affected by random influences or noise. So, we have to introduce the presence of randomness into mathematical representations of the given phenomena.
Stochastic differential systems emerge as effective techniques for constructing and examining such occurrences. Besides mathematics, this theory can be extensively applied to a variety of problems in biostatistics, chemistry, economics, mechanics, and finance. For more information, see \cite{AMAAM,h,J,k,l1}.

The general definition of an optimal control problem requires the minimization of a criterion function of the states and control inputs of the system over a set of admissible control functions. R. Dhayal et al. \cite{r1} investigated the solvability and optimal controls of second-order SDE driven by mixed fBm with non-instantaneous impulses. For more informations, refer to \cite{aa,2,r4,t1}.

In recent years, multi-term time-fractional differential systems have played a significant role in various branches of engineering science. In \cite{v5,v6}, the authors investigated a two-term time-fractional differential system in the abstract context, including a concrete example of a fractional diffusion-wave problem. In 2018 \cite{v4}, V. Singh and D.N. Pandey studied a multi-term time-fractional system and gave some new results on controllability. In 2020 \cite{r6}, R. Chaudhary et al. investigated the controllability results of a multi-term time-fractional system with state-dependent delay. However, in 2021 \cite{v3}, V. Singh et al. examined a multi-term time-fractional stochastic system using the Picard iterative scheme.

Motivated by the above works and adapting the ideas used in \cite{r1,v3,v2}, we extend the work of \cite{r1} to a multi-term time-fractional stochastic system. Our aim is to obtain optimal control of the following multi-term time-fractional stochastic system with non-instantaneous impulses in a separable Hilbert space  $\mathcal{Z}$
\begin{eqnarray} \label{1.1}
	\left \{ \begin{array}{lll} ^cD_{t}^{1+\alpha}z(t)+\sum \limits_{\iota=1}^{m}\beta_\iota ^cD_{t}^{\gamma_\iota}z(t)=Az(t)+Eu(t)+g_1\big(t,z(t)\big)+g_2\big(t,z(t)\big)\dfrac{d\upsilon(t)}{dt}, \\\hspace{7.8cm} t \in(e_q,t_{q+1}], ~q=0,1,2,\ldots,r,
		&\\z(t)=\varsigma_q\big(t,z(t_q^-)\big), \quad t \in (t_q,e_q],~ q=0,1,2,\ldots,r,
		&\\z'(t)=\varphi_q\big(t,z(t_q^-)\big), \quad t \in (t_q,e_q],~ q=0,1,2,\ldots,r,
		&\\  z(0)=z_0, \quad z'(0)=z_1,
		
	\end{array}\right.
\end{eqnarray}
where $0<\alpha \leq \gamma_m\leq \cdots \leq \gamma_1< 1,~\beta_{\iota}>0,\iota=1,2,\ldots,m,$ $ ^cD_{t}^{\alpha}$ stands for the Caputo fractional derivative of order $\alpha,$ and $0=e_0=t_0<t_1<e_1<t_2<\ldots<t_r<e_r<t_{r+1}=\ell<\infty,~ J=[0,\ell].$ The state function $z(t) \in \mathcal{Z}$ and $A:\mathcal{D}(A)\subset  \mathcal{Z}\rightarrow  \mathcal{Z}$ be a closed linear operator. Let $\mathcal{W}$ be another real separable Hilbert space. $\upsilon(t)$ be a $\mathcal{W}$-valued Wiener process with $Tr(Q)<\infty,$ $Q\geq 0$ is nuclear covariance operator on a complete probability space $(\Omega, \Upsilon, P),$ where $\Upsilon_t\subset \Upsilon,$ $t \in [0,\ell]$ is a normal filtration. $\Upsilon_t$ is a right continuous increasing family and $\Upsilon_0$ contains all $P$-null sets. Assume that there exists a complete orthonormal system $\{\xi_n\}_{n=0}^{\infty}$ in $\mathcal{W}$, a bounded sequence of non-negative real numbers $\lambda_n$ such that $Q\xi_n=\lambda_n\xi_n, ~n=1,2,\ldots.$ Thus $\upsilon(t)=\sum_{n=1}^{\infty}\sqrt{\lambda_n}\upsilon_n(t)\xi_n$, where $\upsilon_n(t),~n=1,2,\ldots$ are one-dimensional standard Weiner motions mutually independent over  $(\Omega, \Upsilon, P)$. Also, let $L_2^{0}=L_2\big(Q^{1/2}\mathcal{W},\mathcal{Z}\big),$  the space of all Hilbert-Schmidt operators from $Q^{1/2}\mathcal{W}$ to $\mathcal{Z}$ be a separable Hilbert space with 
$$\|\psi\|^2_Q=Tr(\psi Q \psi^*)=\sum\limits_{n=1}^{\infty}\|\sqrt{\lambda_n}\psi \xi_n\|^2,$$ $\psi \in \mathcal{L}(\mathcal{W},\mathcal{Z}),$ the space of bounded linear operators from $\mathcal{W}$ to $\mathcal{Z}.$  The functions $\varsigma_q\big(t,z(t_q^-)\big)$ and $\varphi_q\big(t,z(t_q^-)\big)$ represent non-instantaneous impulses during the intervals $(t_q,e_q],~q=1,2,\ldots,r.$ The control function $u$ takes value in a separable Hilbert space $\mathcal{U}$ and $E \in \mathcal L\mathcal{(U,Z)}$ is a linear operator. The functions $g_1:J\times \mathcal{Z}\rightarrow \mathcal{Z},$ $g_2:J\times \mathcal{Z}\rightarrow L_2^0,$ $\varsigma_q:(t_q,e_q]\times \mathcal{Z}\rightarrow \mathcal{Z},$ and $\varphi:(t_q,e_q]\times \mathcal{Z}\rightarrow \mathcal{Z},~q=1,2,\ldots,r$ are satisfying some suitable conditions. $z_0$ and $z_1$ denote $\Upsilon_0$-measurable $\mathcal{Z}$-valued random variables.

Now we define the space $\mathcal{PC(Z)}$ formed by all $\Upsilon_t$-adapted, $\mathcal{Z}$-valued measurable stochastic processes $\{z(t):t\in[0,\ell]\}$ such that $z$ is continuous at $t\neq t_q,~z(t_q^-)=z(t_q)$ and $z(t_q^+)$ exists for all $q=1,2,\ldots,r$ endowed with the norm  $$\| z \|_{\mathcal{PC}}=\left( \sup\limits_{t \in [0,\ell]}\mathbb{E} \|z(t)\|^p \right)^{\frac{1}{p}}.$$
Then $\big(\mathcal{PC(Z)},\|\cdot\|_{\mathcal{PC}}\big)$ is a Banach space.

The control function $u(\cdot)\in \mathcal L_{\Upsilon}^p([0,\ell],\mathcal{U}),$ where $ \mathcal L_{\Upsilon}^p([0,\ell],\mathcal{U})$ represents the space of all $\mathcal{U}$-valued stochastic processes, which are $\Upsilon_t$-adapted and measurable satisfying $\mathbb{E}\displaystyle\int\limits_{0}^{\ell} \big\|u(t)\big\|^p_{\mathcal{U}}dt<\infty$ and is a Banach space with the norm
$$\|u\|_{L_{\Upsilon}^p}=\Bigg( \mathbb{E}\int\limits_{0}^{\ell} \big\|u(t)\big\|^p_{\mathcal{U}}dt \Bigg)^{\frac{1}{p}}.$$

\noindent Let $U$ be a non-empty closed, bounded and convex subset of ${\mathcal{U}}.$ The admissible control set is given by 
$$U_{ad}=\big\{u\in \mathcal L_{\Upsilon}^p([0,\ell],{\mathcal{U}})~\rvert~ u(t)\in U\mbox{ a.e. } t\in [0,\ell]\big \}.$$ 

The remaining part of the paper is designed as: Section 2 contains some basic definitions, lemmas, and assumptions. In Section 3, we prove the existence of mild solutions for the proposed system. Section 4 contains the study of optimal control. In Section 5, some examples are given to show the effectuality of the results.

\section{\textbf{Preliminaries and Assumptions}}
For $\rho>0,$ define
\begin{eqnarray*} 
	k_{\rho}(t)=	\left \{ \begin{array}{lll} 0, \hspace{1.4cm} t\leq 0,
		&\\  \frac{1}{\Gamma(\rho)}t^{\rho-1}, \quad t>0.		
	\end{array}\right.
\end{eqnarray*}
 Moreover, for $p,q>0$ we have $(k_p*k_q)(t)=k_{p+q}(t),$ where $*$ stands for convolution. 
 
\begin{definition} \cite{a11} Let $g\in \mathcal{C}^n(\mathbb{R}^*, \mathbb{R}).$ Then the Caputo fractional derivative of order $\rho>0$ is given by
	$$^cD^{\rho}g(t)=\int\limits_{0}^{t}k_{n-\rho}(t-s) D^n g(s)ds, \quad n-1<\rho\leq n, ~~n \in \mathbb{N},$$ and $^cD^{0}g(t)=g(t),$ where $D^n$ represents $n^{th}$ order derivative.
\end{definition}

To define a mild solution for (\ref{1.1}), we have the following family of operators.
\begin{definition}\cite{e2}
	Let $\alpha, \beta_\iota$ and $\gamma_\iota$ be the positive real numbers for all $\iota=1,2,\ldots,m.$ Then, the closed linear operator $A$ genertaes a $(\alpha, \gamma_\iota)$-resolvent family if there exists a function $\mathcal{S}_{\alpha, \gamma_\iota}:\mathbb{R}^+\rightarrow \mathcal{L}(\mathcal{Z})$ which is strongly continuous and $\delta>0$ such that $\Big\{\lambda^{1+\alpha}\sum\limits_{\iota=1}^{m}\beta_\iota\lambda^{\gamma_\iota}:\mbox{Re}~ \lambda>\delta\Big\}\subset \varrho(A),$ resolvent set of $A$ and $$\lambda^\alpha{\bigg(\lambda^{1+\alpha}+\sum\limits_{\iota=1}^{m}\beta_\iota\lambda^{\gamma_\iota}-A\bigg)} ^{-1}z=\int\limits_{0}^{\infty}e^{-\lambda t} \mathcal{S}_{\alpha, \gamma_\iota}(t)z dt, \quad z\in \mathcal{Z}, ~\mbox{Re}~ \lambda>\delta.$$
\end{definition}
\begin{theorem}\label{1}\cite{e2}
	Let $0<\alpha \leq \gamma_m\leq \cdots \leq \gamma_1\leq 1$ and $\beta_\iota\geq0$ be given for all $\iota=1,2,\ldots,m$ and assume that $A$ is the generator of a bounded and strongly continuous cosine family $\{C(t)\}_{t\in[0,\ell]}.$ Then $A$ generates a bounded $({\alpha, \gamma_\iota})$-resolvent family $\{\mathcal{S}_{\alpha, \gamma_\iota}(t)\}_{t\geq0}.$
\end{theorem}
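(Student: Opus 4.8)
The plan is to build $\{\mathcal S_{\alpha,\gamma_\iota}(t)\}_{t\ge 0}$ from the cosine family $\{C(t)\}_{t\in[0,\ell]}$ by a subordination argument carried out at the level of Laplace transforms. Since $A$ generates a bounded strongly continuous cosine family, $\|C(t)\|\le M$ for all $t\ge 0$, and one has the half-plane identity $\mu(\mu^2-A)^{-1}z=\int_0^\infty e^{-\mu t}C(t)z\,dt$ for $\operatorname{Re}\mu>0$, which yields both $\{\mu^2:\operatorname{Re}\mu>0\}=\mathbb C\setminus(-\infty,0]\subset\varrho(A)$ and the estimate $\|(\mu^2-A)^{-1}\|\le M/(|\mu|\,\operatorname{Re}\mu)$. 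Writing $g(\lambda):=\lambda^{1+\alpha}+\sum_{\iota=1}^m\beta_\iota\lambda^{\gamma_\iota}$ and $\mu(\lambda):=g(\lambda)^{1/2}$ (principal branch), I would aim to justify the identity
$$\lambda^\alpha\big(g(\lambda)-A\big)^{-1}z=\frac{\lambda^\alpha}{\mu(\lambda)}\int_0^\infty e^{-\mu(\lambda)t}C(t)z\,dt$$
and then to exhibit the right-hand side as the Laplace transform of the family we are after.

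First I would settle the spectral and analyticity step. For $\operatorname{Re}\lambda>0$ one has $\arg\lambda\in(-\tfrac\pi2,\tfrac\pi2)$, so (as $1+\alpha\le 2$ and $\gamma_\iota\le 1$) each power $\lambda^{1+\alpha}$ and $\lambda^{\gamma_\iota}$ lies in $\mathbb C\setminus(-\infty,0]$ and in the closed half-plane matching the sign of $\operatorname{Im}\lambda$; hence $g(\lambda)$, a combination of them with positive coefficients, again lies in $\mathbb C\setminus(-\infty,0]\subset\varrho(A)$, so $\operatorname{Re}\mu(\lambda)>0$ and $\lambda\mapsto H(\lambda):=\lambda^\alpha\big(g(\lambda)-A\big)^{-1}$ is a well-defined analytic $\mathcal L(\mathcal Z)$-valued function on every half-plane $\operatorname{Re}\lambda>\delta$, $\delta>0$, with $\|H(\lambda)\|\le M\,|\lambda|^\alpha\big/\big(|g(\lambda)|^{1/2}\operatorname{Re}g(\lambda)^{1/2}\big)$. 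Along the way one records quantitative lower bounds for $|\mu(\lambda)|$ and $\operatorname{Re}\mu(\lambda)$ in terms of $\operatorname{Re}\lambda$, using that $\lambda^{1+\alpha}$ is the dominant term as $|\lambda|\to\infty$.

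The crux is the subordination. For each fixed $t\ge 0$ one must show that the scalar function $\lambda\mapsto\frac{\lambda^\alpha}{\mu(\lambda)}e^{-\mu(\lambda)t}$ is the Laplace transform of a kernel $w(t,\cdot)$, i.e. $\frac{\lambda^\alpha}{\mu(\lambda)}e^{-\mu(\lambda)t}=\int_0^\infty e^{-\lambda s}w(t,s)\,ds$; here the exponent ordering $\alpha\le\gamma_m\le\cdots\le\gamma_1\le 1$ is exactly what makes this work, forcing $\mu(\lambda)=g(\lambda)^{1/2}$ to behave like the Bernstein function $\lambda^{(1+\alpha)/2}$ (leading exponent in $(\tfrac12,1]$) up to lower-order corrections, so that $e^{-t\mu(\lambda)}$ is completely monotone in $\lambda$, while the prefactor $\lambda^\alpha/\mu(\lambda)\sim\lambda^{(\alpha-1)/2}$ carries only a negative power $>-1$. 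Granting the representation of $w$, Fubini's theorem applied to
$$H(\lambda)z=\int_0^\infty\!\!\int_0^\infty e^{-\lambda s}w(t,s)C(t)z\,ds\,dt=\int_0^\infty e^{-\lambda s}\Big(\int_0^\infty w(t,s)C(t)z\,dt\Big)ds$$
identifies $\mathcal S_{\alpha,\gamma_\iota}(s)z:=\int_0^\infty w(t,s)C(t)z\,dt$ as a function whose Laplace transform equals $H(\lambda)$; by uniqueness of the Laplace transform this is precisely an $(\alpha,\gamma_\iota)$-resolvent family generated by $A$. Strong continuity of $s\mapsto\mathcal S_{\alpha,\gamma_\iota}(s)z$ follows from dominated convergence together with $\|C(t)\|\le M$ and continuity of $C(\cdot)z$, and the bound $\|\mathcal S_{\alpha,\gamma_\iota}(s)\|\le M\int_0^\infty w(t,s)\,dt$ gives boundedness once one notes that $\int_0^\infty e^{-\lambda s}\big(\int_0^\infty w(t,s)\,dt\big)ds=\lambda^\alpha/g(\lambda)=1/\big(\lambda+\sum_{\iota=1}^m\beta_\iota\lambda^{\gamma_\iota-\alpha}\big)$ with $\gamma_\iota-\alpha\ge 0$, which is the Laplace transform of a bounded function.

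The main obstacle is this subordination step: proving rigorously that $\lambda\mapsto\frac{\lambda^\alpha}{\mu(\lambda)}e^{-\mu(\lambda)t}$ is a genuine Laplace transform — equivalently, pinning down the function-theoretic class of $g(\lambda)^{1/2}$ (Bernstein, resp. complete Bernstein, under the exponent ordering) and the complete monotonicity of the prefactor — and then controlling $w$ well enough to apply Fubini and to bound $\mathcal S_{\alpha,\gamma_\iota}$ uniformly in $s$; the multi-term structure, with several incommensurate powers $\gamma_\iota$, is exactly what makes these scalar estimates delicate. An alternative that avoids Bernstein-function machinery is to verify the Arendt--Widder generation conditions for $H$ directly, namely analyticity on $\operatorname{Re}\lambda>\delta$ together with $\|H^{(n)}(\lambda)\|\le C\,n!\,(\operatorname{Re}\lambda-\delta)^{-(n+1)}$, but then the technical heart merely shifts to estimating the derivatives of $\mu(\lambda)=g(\lambda)^{1/2}$, which is no easier. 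This is the step carried out in \cite{e2}.
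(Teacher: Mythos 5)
First, a point of comparison: the paper does not prove Theorem~\ref{1} at all --- it is imported verbatim from \cite{e2} --- so there is no in-paper argument to measure yours against. Your outline is, in substance, the subordination argument used in that reference: set $g(\lambda)=\lambda^{1+\alpha}+\sum_{\iota=1}^{m}\beta_\iota\lambda^{\gamma_\iota}$, use the cosine-family identity $\mu(\mu^{2}-A)^{-1}z=\int_0^\infty e^{-\mu t}C(t)z\,dt$ with $\mu=g(\lambda)^{1/2}$ to obtain $\lambda^{\alpha}(g(\lambda)-A)^{-1}z=\frac{\lambda^{\alpha}}{g(\lambda)^{1/2}}\int_0^\infty e^{-g(\lambda)^{1/2}t}C(t)z\,dt$, and then realize the scalar factor as the Laplace transform (in a new variable) of a nonnegative subordination kernel $w(t,\cdot)$, so that $\mathcal{S}_{\alpha,\gamma_\iota}(s)z=\int_0^\infty w(t,s)C(t)z\,dt$. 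The strategy and the core identity are correct, and your remarks on boundedness and strong continuity, granted the kernel, are fine.

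However, as written the proposal is a plan rather than a proof: the step you yourself single out as the crux --- that $\lambda\mapsto\frac{\lambda^{\alpha}}{\mu(\lambda)}e^{-t\mu(\lambda)}$ is completely monotone, hence the Laplace transform of a nonnegative kernel --- is essentially the whole content of the theorem, and you defer it to \cite{e2}. Moreover, the heuristic you offer for it (``$\mu(\lambda)$ behaves like the Bernstein power $\lambda^{(1+\alpha)/2}$ up to lower-order corrections'') is not an argument: $\lambda^{1+\alpha}$ is itself not a Bernstein function for $\alpha>0$, so one cannot deduce that $g^{1/2}$ is Bernstein from closure of that class under sums and fractional powers, and asymptotic behaviour alone never yields complete monotonicity. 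The place where the hypothesis $0<\alpha\le\gamma_m\le\cdots\le\gamma_1\le1$ actually enters is the factorization $g(\lambda)=\lambda^{\alpha}\big(\lambda+\sum_{\iota=1}^{m}\beta_\iota\lambda^{\gamma_\iota-\alpha}\big)=:\lambda^{\alpha}h(\lambda)$: every exponent appearing in $h$ lies in $[0,1]$ precisely because $\alpha\le\gamma_\iota\le1$, so $h$ is a sum of complete Bernstein functions and hence complete Bernstein, and the stability of that class under geometric means ($f,\,h\in\mathrm{CBF}$ implies $f^{1/2}h^{1/2}\in\mathrm{CBF}$) gives $g^{1/2}=\lambda^{\alpha/2}h(\lambda)^{1/2}\in\mathrm{CBF}$; a companion manipulation is needed to show the prefactor $\lambda^{\alpha}/g(\lambda)^{1/2}=\lambda^{\alpha/2}/h(\lambda)^{1/2}$ is completely monotone. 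Until these verifications (or the equivalent Arendt--Widder derivative estimates for $H(\lambda)$) are actually carried out, together with the integrability control on $w$ needed for Fubini and the uniform bound, the argument has a genuine gap at its central step.
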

\begin{definition} A stochastic process $z \in \mathcal {PC(Z)}$ is said to be a mild solution of (\ref{1.1}) if for every $t \in J,$ $z(t)$ fulfills $z(0)=z_0,~z'(0)=z_1$ and $z(t)=\varsigma_q\big(t,z(t_q^-)\big),~z'(t)=\varphi_q\big(t,z(t_q^-)\big),~ t \in (t_q,e_q],~q=1,2,\ldots,r$ and 
	\begin{eqnarray}\label{2.1}
	z(t)=	\left \{ \begin{array}{lll}
	\mathcal{S}_{\alpha, \gamma_\iota}(t)z_0+\int_{0}^{t}\mathcal{S}_{\alpha, \gamma_\iota}(s)z_1ds+\sum \limits_{\iota=1}^{m}\beta_{\iota}\int\limits_{0}^{t}\frac{(t-s)^{\alpha-\gamma_{\iota}}}{\Gamma(1+\alpha-\gamma_{\iota})}\mathcal{S}_{\alpha, \gamma_\iota}(s)z_0ds\nonumber\\ +\int\limits_{0}^{t} \mathcal{J}_{\alpha, \gamma_\iota}(t-s)Eu(s)ds +\int\limits_{0}^{t}\mathcal{J}_{\alpha, \gamma_\iota}(t-s) g_1\big(s,z(s)\big)ds\nonumber \\+\int\limits_{0}^{t}\mathcal{J}_{\alpha, \gamma_\iota}(t-s)g_2\big(s,z(s)\big)d\upsilon(s),\hspace{4cm} t\in[0,t_1], ~q=0,
	&\\	\mathcal{S}_{\alpha, \gamma_\iota}(t-e_q)\varsigma_q\big(e_q,z(t_q)\big)+\int_{e_q}^{t}\mathcal{S}_{\alpha, \gamma_\iota}(s-e_q)\varphi_q\big(e_q,z(t_q)\big)ds\nonumber\\+\sum \limits_{\iota=1}^{m}\beta_{\iota}\int\limits_{e_q}^{t}\frac{(t-s)^{\alpha-\gamma_{\iota}}}{\Gamma(1+\alpha-\gamma_{\iota})}\mathcal{S}_{\alpha, \gamma_\iota}(s-e_q)\varsigma_q\big(e_q,z(t_q)\big)ds\nonumber\\ +\int\limits_{e_q}^{t} \mathcal{J}_{\alpha, \gamma_\iota}(t-s)Eu(s)ds +\int\limits_{e_q}^{t}\mathcal{J}_{\alpha, \gamma_\iota}(t-s) g_1\big(s,z(s)\big)ds\nonumber \\+\int\limits_{e_q}^{t}\mathcal{J}_{\alpha, \gamma_\iota}(t-s)g_2\big(s,z(s)\big)d\upsilon(s),\hspace{3.9cm} t\in(e_q,t_{q+1}], ~q=1,2,\ldots,r,
		\end{array}\right.
	\end{eqnarray}
	where $\mathcal{J}_{\alpha, \gamma_\iota}(t)=(k_\alpha*\mathcal{S}_{\alpha, \gamma_\iota})(t).$
\end{definition}

\begin{lemma}\label{lm1} \cite{k} Let  $\mathcal{V}$ be ${L_2^0}$-valued predictable process with $p \geq 2$ such that $\mathbb{E}\Bigg(\displaystyle\int_{0}^{\ell} \|\mathcal{V}(s)\|_{L_2^0}^pds\Bigg)< \infty,$ then
\begin{eqnarray*}\mathbb{E}\Bigg(\sup\limits_{s\in[0,t]} \bigg\| \int_{0}^{s}\mathcal{V}(\xi)dv(\xi) \bigg \|^p \Bigg)&\leq& c_p\sup\limits_{s\in[0,t]}\mathbb{E}\Bigg( \bigg\|\int_{0}^{s}  \mathcal{V}(\xi)dv(\xi)\bigg\|^p\Bigg)\\&\leq& C_p\mathbb{E}\Bigg( \int_{0}^{t}  \|\mathcal{V}(\xi)\|^p_{L_2^0}d\xi\Bigg),~t\in[0,\ell],
\end{eqnarray*}
where $c_p=\Big(\frac{p}{p-1}\Big)^p$ and $C_p=\Big(\frac{p}{2}(p-1)\Big)^{\frac{p}{2}}\Big(\frac{p}{p-1}\Big)^{\frac{p^2}{2}}.$
\end{lemma}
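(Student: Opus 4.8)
The plan is to handle the two inequalities separately. Throughout, set $M_s:=\int_0^s\mathcal{V}(\xi)\,d\upsilon(\xi)$ for $s\in[0,t]$; under the hypothesis $\mathbb{E}\bigl(\int_0^\ell\|\mathcal{V}(s)\|_{L_2^0}^p\,ds\bigr)<\infty$ with $p\ge 2$, the process $\mathcal{V}$ is stochastically integrable and $\{M_s\}_{s\in[0,t]}$ is a continuous, $\mathcal{Z}$-valued $\Upsilon_s$-martingale with $\sup_{s\le t}\mathbb{E}\|M_s\|^p<\infty$. First I would note that, because $x\mapsto\|x\|$ is convex, $\{\|M_s\|\}_{s\le t}$ is a non-negative real-valued submartingale; Doob's $L^p$-maximal inequality then yields $\mathbb{E}\bigl(\sup_{s\le t}\|M_s\|^p\bigr)\le\bigl(\tfrac{p}{p-1}\bigr)^p\mathbb{E}\|M_t\|^p$, and since $s\mapsto\mathbb{E}\|M_s\|^p$ is non-decreasing (the submartingale property of $\|M_s\|^p$) the right-hand side equals $c_p\sup_{s\le t}\mathbb{E}\|M_s\|^p$. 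This is precisely the first inequality.

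For the second inequality I would first prove it for simple (elementary) $L_2^0$-valued $\Upsilon_s$-predictable integrands and then pass to a general $\mathcal{V}$ by density, using that such simple processes are dense in $\mathcal{L}_{\Upsilon}^p([0,\ell],L_2^0)$ and that both sides of the estimate behave continuously along the approximation (the stochastic-integral side being controlled through the It\^o isometry $\mathbb{E}\|M_s\|^2=\mathbb{E}\int_0^s\|\mathcal{V}(r)\|_{L_2^0}^2\,dr$ at the $L^2$-level together with the first inequality). For a simple integrand one applies It\^o's formula to the function $x\mapsto\|x\|^p$, which is of class $C^2$ on $\mathcal{Z}$ exactly because $p\ge 2$, along the process $M$. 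Taking expectations kills the martingale term and leaves $\mathbb{E}\|M_s\|^p$ bounded by the second-order (trace) term, which, after expanding in the orthonormal system $\{\xi_n\}$ and using the definition of $\|\cdot\|_{L_2^0}$, is dominated by $\tfrac{p(p-1)}{2}\,\mathbb{E}\int_0^s\|M_r\|^{p-2}\|\mathcal{V}(r)\|_{L_2^0}^2\,dr$. Applying H\"older's inequality on $[0,s]\times\Omega$ with exponents $\tfrac{p}{p-2}$ and $\tfrac{p}{2}$ and then invoking the first inequality to absorb the resulting $\mathbb{E}\bigl(\sup_{r\le s}\|M_r\|^p\bigr)$ factor, one is left with a self-referential bound that, once solved for $\sup_{s\le t}\mathbb{E}\|M_s\|^p$, gives the claimed estimate with $C_p=\bigl(\tfrac{p}{2}(p-1)\bigr)^{p/2}\bigl(\tfrac{p}{p-1}\bigr)^{p^2/2}$.

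I expect the main obstacle to be the bookkeeping of the constants in this last step: the factor $\bigl(\tfrac{p}{p-1}\bigr)^{p^2/2}$ is not produced by a single named inequality but by feeding Doob's bound back into the It\^o/H\"older estimate and then resolving the resulting inequality, so every exponent must be tracked carefully and one has to be sure all the quantities involved are finite before dividing; this is exactly why reducing first to simple integrands and, where needed, localizing by stopping times and regularizing $x\mapsto\|x\|^p$ near the origin is convenient, with the localization removed afterwards by monotone convergence. Since the statement is classical, an alternative is simply to quote the cited reference \cite{k}.
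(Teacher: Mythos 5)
The paper does not prove this lemma at all: it is imported verbatim from Liu's monograph \cite{k} and used as a black box, so there is no in-paper argument to compare yours against. Your sketch is the standard textbook derivation. The first inequality is handled correctly and completely: $\|M_s\|$ is a nonnegative submartingale by convexity of the norm and Jensen, Doob's $L^p$-maximal inequality gives the factor $\bigl(\tfrac{p}{p-1}\bigr)^p$, and bounding $\mathbb{E}\|M_t\|^p$ by $\sup_{s\le t}\mathbb{E}\|M_s\|^p$ is immediate (your appeal to monotonicity of $s\mapsto\mathbb{E}\|M_s\|^p$ is not even needed). The overall scheme for the second inequality --- reduce to simple integrands, apply It\^o's formula to $x\mapsto\|x\|^p$ (which is indeed $C^2$ with $\|D^2\|\le p(p-1)\|x\|^{p-2}$ for $p\ge2$), kill the martingale term after localization, H\"older with exponents $\tfrac{p}{p-2}$ and $\tfrac{p}{2}$, feed Doob back in, and solve the resulting inequality $X\le AX^{(p-2)/p}B^{2/p}$ --- is exactly how this estimate is proved in Da Prato--Zabczyk and in \cite{k}.

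There is, however, one concrete point where your plan will not close as stated. The It\^o/H\"older argument you describe naturally terminates in the bound $\sup_{s\le t}\mathbb{E}\|M_s\|^p\le C_p\,\mathbb{E}\bigl(\int_0^t\|\mathcal{V}(\xi)\|_{L_2^0}^2\,d\xi\bigr)^{p/2}$ with a constant depending only on $p$; this is the form in \cite{k}, and it is also the form the authors actually invoke inside the proof of Theorem 3.1 (note the $\mathbb{E}\bigl(\int_0^t\|\mathcal{J}_{\alpha,\gamma_\iota}(t-s)\|^2\|g_2\|_{L_2^0}^2\,ds\bigr)^{p/2}$ terms there). Passing from that to the right-hand side displayed in the lemma, $C_p\,\mathbb{E}\int_0^t\|\mathcal{V}(\xi)\|_{L_2^0}^p\,d\xi$, requires one more H\"older inequality in time and costs an extra factor $t^{(p-2)/2}$; with a $t$-independent $C_p$ the displayed inequality is in fact false for $p>2$ (take $\mathcal{V}$ a nonzero constant, so the left side grows like $t^{p/2}$ while the right side grows like $t$). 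So the final ``bookkeeping'' step of your proof cannot produce the statement exactly as quoted --- the discrepancy is a transcription slip in the lemma rather than a flaw in your method, but you should either prove the $(\int\|\cdot\|^2)^{p/2}$ version or carry the explicit $\ell^{(p-2)/2}$ into the constant. The exact value of $C_p$ is also not worth chasing: your scheme yields $\bigl(\tfrac{p(p-1)}{2}\bigr)^{p/2}c_p^{(p-2)/2}$, which is smaller than the quoted $\bigl(\tfrac{p(p-1)}{2}\bigr)^{p/2}c_p^{p/2}$, so the stated bound still holds.
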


Let $S=\sup\limits_{t\in[0,\ell]}\|\mathcal{S}_{\alpha, \gamma_\iota}(t)\|$ and  $\|\mathcal{J}_{\alpha, \gamma_\iota}(t)z\|=\frac{St^\alpha}{\Gamma(1+\alpha)}\|z\|$ for $z \in \mathcal{Z}.$
Consider the following assumptions:
\begin{itemize}
	\item[(C1)] $\mathcal{S}_{\alpha, \gamma_\iota}(t)$ and $\mathcal{J}_{\alpha, \gamma_\iota}(t),~t>0$ are compact.
	\item[(C2)] The functions $g_1:[0,\ell]\times \mathcal{Z}\rightarrow \mathcal{Z}$ and $g_2:[0,\ell]\times \mathcal{Z} \rightarrow {L}_2^0$ satisfy the following conditions:
		\begin{itemize}
		\item[(a)] The functions $g_1(\cdot, z):[0,\ell]\rightarrow \mathcal{Z}$ and $g_2(\cdot,z):[0,\ell]\rightarrow L_2^0$ are strongly measurable for all $z\in\mathcal{Z},$ and the functions $g_1(t,\cdot):\mathcal{Z}\rightarrow \mathcal{Z}$ and $g_2(t,\cdot):\mathcal{Z}\rightarrow L_2^0$ are continuous for each $t\in[0,\ell].$
		\item[(b)] There exist continuous functions $\mu_1,\mu_2\in L^1([0,\ell],\mathbb{R}^+)$ to ensure that for all $(t,z)\in[0,\ell]\times \mathcal{Z}$
\begin{eqnarray*}
 \|g_1(t,z)\|^p\leq {\mu}_1(t)\|z\|^p,\quad 
	\|g_2(t,z)\|^p_{L_2^0}\leq\mu_2(t)\|z\|^p.
	\end{eqnarray*}
\end{itemize}
	\item[(C3)] The functions $\varsigma_q ,~\varphi_q :(t_q,e_q]\times \mathcal{Z}\rightarrow \mathcal{Z}$ are continuous and there exist positive constants $a_q,b_q,$ $\tilde{a}_q$ and $ \tilde{b}_q,$ $q=1,2,\ldots,r$ such that 
		\begin{eqnarray*}
		\big\|\varsigma_q \big(t,y\big)-\varsigma_q \big(t,{z}\big)\big\|^p \leq a_q\|y-z\|^p,\quad 	\| \varsigma_q (t,y)\|^p\leq \tilde{a}_q\big(1+\|y\|^p\big),\\ \big\|\varphi_q\big(t,y\big)-\varphi_q \big(t,{z}\big)\big\|^p\leq b_q\|y-z\|^p,\quad 	\| \varphi_q (t,y)\|^p\leq \tilde{b}_q\big(1+\|y\|^p\big),
		\end{eqnarray*} for all $y,z\in \mathcal{Z}.$
	\item[(C4)] The following inequalities hold:
	\begin{itemize}
		\item[(a)]$\pounds_{\mathcal{F}_1}=\max\limits_{1\leq q\leq r}\bigg\{a_q,~ 3^{p-1}\bigg(S^pa_q+S^p{\ell}b_q+\sum \limits_{\iota=1}^{m}\beta_{\iota}\Big(\frac{S}{\Gamma(1+\alpha-\gamma_{\iota})}\Big)^p\frac{\ell^{(1+\alpha-\gamma_{\iota})p}}{(1+p \alpha-p \gamma_{\iota})}a_q\bigg)\bigg\}<1.$ 
	\item[(b)] 
		$\pounds_{\mathcal{F}}=\max\limits_{1\leq q\leq r}\bigg\{\tilde{a}_q+6^{p-1}S^p\tilde{a}_q+6^{p-1}S^p\ell\tilde{b}_q+6^{p-1}\sum \limits_{\iota=1}^{m}\beta_{\iota}\bigg(\frac{S}{\Gamma(1+\alpha-\gamma_{\iota})}\bigg)^p\frac{1}{(1+p \alpha-p \gamma_{\iota})}\\\times{\ell^{(1+\alpha-\gamma_{\iota})p}}\tilde{a}_q+6^{p-1}\bigg(\frac{S}{\Gamma(1+\alpha)}\bigg)^p\bigg(\frac{p-1}{\alpha p+p-1}\bigg)^p{\ell^{\alpha p+p-1}}\displaystyle\int_{0}^{t}\mu_1(s)ds\\+ 6^{p-1}C_p\bigg(\frac{S}{\Gamma(1+\alpha)}\bigg)^p\bigg(\frac{p-2}{2\alpha p+p-2}\bigg)^{\frac{p-2}{p}}{\ell^{\frac{2\alpha p+p-2}{2}}}\displaystyle\int_{0}^{t}\mu_2(s)ds\bigg\}<1.$
	 \end{itemize}
 \end{itemize}
\section{\textbf{Existence of Mild Solutions}}
\begin{theorem}\label{th1}
If the hypotheses (C1)-(C4) are satisfied, then for each $u\in U_{ad}$, the system (\ref{1.1}) has at least one mild solution on $[0,\ell].$
	
\end{theorem}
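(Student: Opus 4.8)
The plan is to rewrite \eqref{1.1} as a fixed point equation and invoke Krasnoselskii's fixed point theorem. Define the operator $\Phi:\mathcal{PC(Z)}\to\mathcal{PC(Z)}$ by letting $\Phi z$ be the right-hand side of \eqref{2.1}; then mild solutions of \eqref{1.1} are precisely the fixed points of $\Phi$. First I would check that $\Phi$ is well defined: for $z\in\mathcal{PC(Z)}$ and a fixed $u\in U_{ad}$, each term in \eqref{2.1} is an $\Upsilon_t$-adapted, $\mathcal{Z}$-valued process with finite $p$-th moment and the right continuity/left-limit behaviour at the points $t_q$, so $\Phi z\in\mathcal{PC(Z)}$. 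Here one uses the bound $\|\mathcal{J}_{\alpha,\gamma_\iota}(t)z\|=\frac{St^{\alpha}}{\Gamma(1+\alpha)}\|z\|$, H\"older's inequality on the deterministic integrals, Lemma~\ref{lm1} for the stochastic convolution, and the growth conditions (C2)(b) and (C3).

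Next, split $\Phi=\Phi_1+\Phi_2$, where $\Phi_1$ collects the structural terms (the resolvent terms carrying $z_0,z_1$ together with the impulse data $\varsigma_q,\varphi_q$) and $\Phi_2$ collects the three convolution integrals involving $Eu$, $g_1$ and $g_2$. Introduce the closed, bounded, convex set $B_\kappa=\{z\in\mathcal{PC(Z)}:\|z\|_{\mathcal{PC}}^{p}\le\kappa\}$. Using the elementary inequalities $\|\sum_{i=1}^{n}x_i\|^{p}\le n^{p-1}\sum\|x_i\|^{p}$ to split $\mathbb{E}\|\Phi_1 z(t)+\Phi_2 w(t)\|^{p}$, together with (C2)(b), (C3), Lemma~\ref{lm1} and the estimates for $\mathcal{S}_{\alpha,\gamma_\iota}$ and $\mathcal{J}_{\alpha,\gamma_\iota}$, one obtains an a priori bound of the form $\|\Phi_1 z+\Phi_2 w\|_{\mathcal{PC}}^{p}\le c_0+\pounds_{\mathcal{F}}\,\kappa$ for $z,w\in B_\kappa$; since $\pounds_{\mathcal{F}}<1$ by (C4)(b), a sufficiently large $\kappa$ gives $\Phi_1 z+\Phi_2 w\in B_\kappa$ for all $z,w\in B_\kappa$.

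Then I would show that $\Phi_1$ is a contraction on $B_\kappa$. For $z,\bar z\in B_\kappa$ the terms containing $z_0,z_1$ cancel, and on each subinterval $(e_q,t_{q+1}]$ (and on $(t_q,e_q]$) the Lipschitz conditions in (C3) on $\varsigma_q$ and $\varphi_q$, combined again with the $n^{p-1}$-splitting and the operator bounds, yield $\mathbb{E}\|\Phi_1 z(t)-\Phi_1\bar z(t)\|^{p}\le\pounds_{\mathcal{F}_1}\,\mathbb{E}\|z(t_q^{-})-\bar z(t_q^{-})\|^{p}$ on the relevant interval; taking the supremum over $[0,\ell]$ gives $\|\Phi_1 z-\Phi_1\bar z\|_{\mathcal{PC}}^{p}\le\pounds_{\mathcal{F}_1}\|z-\bar z\|_{\mathcal{PC}}^{p}$ with $\pounds_{\mathcal{F}_1}<1$ by (C4)(a). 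For $\Phi_2$, continuity on $B_\kappa$ follows from (C2)(a), the continuity of $g_1(t,\cdot),g_2(t,\cdot)$, the dominated convergence theorem and Lemma~\ref{lm1}. Compactness is obtained by showing, for each fixed $t$, that $\{\Phi_2 z(t):z\in B_\kappa\}$ is relatively compact in $L^{p}(\Omega,\mathcal{Z})$ — this uses the compactness of $\mathcal{J}_{\alpha,\gamma_\iota}(t-s)$ for $t-s>0$ from (C1), approximating each integral by truncating a small interval $[t-\epsilon,t]$ whose contribution is controlled by (C2)(b), (C3) and Lemma~\ref{lm1} — together with equicontinuity of $\{\Phi_2 z:z\in B_\kappa\}$ on each subinterval, including the one-sided limits at $t_q$ and $e_q$, which follows from the strong continuity of $\mathcal{S}_{\alpha,\gamma_\iota}$ and the absolute continuity of the integral terms. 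A piecewise version of the Arz\'ela--Ascoli theorem on $\mathcal{PC(Z)}$ then gives that $\Phi_2(B_\kappa)$ is relatively compact. Finally, Krasnoselskii's fixed point theorem applied to $\Phi_1+\Phi_2$ on $B_\kappa$ produces a fixed point, i.e.\ a mild solution of \eqref{1.1} on $[0,\ell]$.

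\textbf{Main obstacle.} The delicate point is the compactness and equicontinuity of $\Phi_2$: one must handle the stochastic convolution $\int\mathcal{J}_{\alpha,\gamma_\iota}(t-s)g_2(s,z(s))\,d\upsilon(s)$ near the endpoint $s=t$ and uniformly across the impulsive partition, which requires a careful $\epsilon$-truncation argument combining the compactness hypothesis (C1) with the Burkholder-type estimate of Lemma~\ref{lm1}.
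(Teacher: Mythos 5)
Your proposal follows essentially the same route as the paper: the same decomposition into a structural/impulse part ($\mathcal{F}_1$, shown to be a contraction via (C3) and (C4)(a)) and a convolution part ($\mathcal{F}_2$, shown to be completely continuous via (C1), Lemma~\ref{lm1}, the $\epsilon$-truncation near $s=t$, and Arz\'ela--Ascoli), with the invariance of the ball secured by (C4)(b) and Krasnoselskii's theorem concluding. The only cosmetic difference is that you obtain the invariance $\Phi_1z+\Phi_2w\in B_\kappa$ by a direct a priori bound, whereas the paper argues by contradiction (dividing by $h$ and letting $h\to\infty$); these are equivalent uses of the same hypothesis.
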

\begin{proof} For a constant $h>0,$ we define
	$$\Omega_h=\{z\in \mathcal{PC(Z)}:\|z\|^p_{\mathcal{PC}}\leq h\}.$$ Clearly, $\Omega_h$ is convex, bounded and closed subset of $\mathcal{PC(Z)}.$ Now, define an operator $\mathcal{F}:\Omega_h\rightarrow \mathcal{PC(Z)}$ by 
	\begin{eqnarray*}
		(\mathcal{F} z)(t)=	\left \{ \begin{array}{lll}
		\mathcal{S}_{\alpha, \gamma_\iota}(t)z_0+\displaystyle\int_{0}^{t}\mathcal{S}_{\alpha, \gamma_\iota}(s)z_1ds+\sum \limits_{\iota=1}^{m}\beta_{\iota}\displaystyle\int_{0}^{t}\frac{(t-s)^{\alpha-\gamma_{\iota}}}{\Gamma(1+\alpha-\gamma_{\iota})}\mathcal{S}_{\alpha, \gamma_\iota}(s)z_0ds\nonumber\\ +\displaystyle\int_{0}^{t} \mathcal{J}_{\alpha, \gamma_\iota}(t-s)Eu(s)ds +\displaystyle\int_{0}^{t}\mathcal{J}_{\alpha, \gamma_\iota}(t-s) g_1\big(s,z(s)\big)ds\nonumber \\+\displaystyle\int_{0}^{t}\mathcal{J}_{\alpha, \gamma_\iota}(t-s)g_2\big(s,z(s)\big)d\upsilon(s), \hspace{3.1cm} t\in[0,t_1], ~q=0,&\\ \varsigma_q\big(t,z(t_q^-)\big), \hspace{6.5cm} t\in (t_q,e_q], ~q=1,2,\ldots,r,
		&\\	\mathcal{S}_{\alpha, \gamma_\iota}(t-e_q)\varsigma_q\big(e_q,z(t_q)\big)+\displaystyle\int_{e_q}^{t}\mathcal{S}_{\alpha, \gamma_\iota}(s-e_q)\varphi_q\big(e_q,z(t_q)\big)ds\nonumber\\ +\sum \limits_{\iota=1}^{m}\beta_{\iota}\displaystyle\int_{e_q}^{t}\frac{(t-s)^{\alpha-\gamma_{\iota}}}{\Gamma(1+\alpha-\gamma_{\iota})}\mathcal{S}_{\alpha, \gamma_\iota}(s-e_q)\varsigma_q\big(e_q,z(t_q)\big)ds\nonumber\\+\displaystyle\int_{e_q}^{t} \mathcal{J}_{\alpha, \gamma_\iota}(t-s)Eu(s)ds +\displaystyle\int_{e_q}^{t}\mathcal{J}_{\alpha, \gamma_\iota}(t-s) g_1\big(s,z(s)\big)ds\nonumber \\+\displaystyle\int_{e_q}^{t}\mathcal{J}_{\alpha, \gamma_\iota}(t-s)g_2\big(s,z(s)\big)d\upsilon(s),\hspace{2.9cm} t\in(e_q,t_{q+1}], ~q=1,2,\ldots,r.
	\end{array}\right.
\end{eqnarray*}
Now, we split $\mathcal{F}$ as $\mathcal{F}_1+\mathcal{F}_2,$ where 
\begin{eqnarray*}
	(\mathcal{F}_1 z)(t)=	\left \{ \begin{array}{lll}
		\mathcal{S}_{\alpha, \gamma_\iota}(t)z_0+\displaystyle\int_{0}^{t}\mathcal{S}_{\alpha, \gamma_\iota}(s)z_1ds+\sum \limits_{\iota=1}^{m}\beta_{\iota}\displaystyle\int_{0}^{t}\frac{(t-s)^{\alpha-\gamma_{\iota}}}{\Gamma(1+\alpha-\gamma_{\iota})}\mathcal{S}_{\alpha, \gamma_\iota}(s)z_0ds, \hspace{.1cm} t\in[0,t_1], q=0,&\\ \varsigma_q\big(t,z(t_q^-)\big), \hspace{6.7cm} t\in (t_q,e_q], ~q=1,2,\ldots,r,
		&\\	\mathcal{S}_{\alpha, \gamma_\iota}(t-e_q)\varsigma_q\big(e_q,z(t_q)\big)+\displaystyle\int_{e_q}^{t}\mathcal{S}_{\alpha, \gamma_\iota}(s-e_q)\varphi_q\big(e_q,z(t_q)\big)ds\nonumber\\ +\sum \limits_{\iota=1}^{m}\beta_{\iota}\displaystyle\int_{e_q}^{t}\frac{(t-s)^{\alpha-\gamma_{\iota}}}{\Gamma(1+\alpha-\gamma_{\iota})}\mathcal{S}_{\alpha, \gamma_\iota}(s-e_q)\varsigma_q\big(e_q,z(t_q)\big)ds,\hspace{.3cm} t\in(e_q,t_{q+1}], ~q=1,2,\ldots,r,
	\end{array}\right.
\end{eqnarray*}
and \begin{eqnarray*}
	(\mathcal{F}_2 z)(t)=	\left \{ \begin{array}{lll}
	\displaystyle\int_{0}^{t} \mathcal{J}_{\alpha, \gamma_\iota}(t-s)Eu(s)ds +\displaystyle\int_{0}^{t}\mathcal{J}_{\alpha, \gamma_\iota}(t-s) g_1\big(s,z(s)\big)ds\nonumber \\+\displaystyle\int_{0}^{t}\mathcal{J}_{\alpha, \gamma_\iota}(t-s)g_2\big(s,z(s)\big)d\upsilon(s), \hspace{3.2cm} t\in[0,t_1], ~q=0,&\\ 0, \hspace{8.1cm} t\in (t_q,e_q], ~q=1,2,\ldots,r,
		&\\	\displaystyle\int_{e_q}^{t} \mathcal{J}_{\alpha, \gamma_\iota}(t-s)Eu(s)ds +\displaystyle\int_{e_q}^{t}\mathcal{J}_{\alpha, \gamma_\iota}(t-s) g_1\big(s,z(s)\big)ds\nonumber \\+\displaystyle\int_{e_q}^{t}\mathcal{J}_{\alpha, \gamma_\iota}(t-s)g_2\big(s,z(s)\big)d\upsilon(s),\hspace{3.2cm} t\in(e_q,t_{q+1}], ~q=1,2,\ldots,r.
	\end{array}\right.
\end{eqnarray*}
For the sake of convenience, we divide the proof into several steps.\\
Step 1. There exists $h>0$ such that $\mathcal{F}(\Omega_h)\subset \Omega_h.$ Suppose this claim is false, we might choose $z^h\in \Omega_h$ for any $h>0,$  and $t\in [0,\ell]$ such that $\mathbb{E}\|\mathcal{F}(z^h)(t)\|^p>h.$ By Lemma~\ref{lm1} and H\"{o}lder's inequality, we have for $t\in[0,t_1],$
\begin{eqnarray*}
	h<\mathbb{E}\|\mathcal{F}(z^h)(t)\|^p&\leq& 6^{p-1}\mathbb{E}\|\mathcal{S}_{\alpha, \gamma_\iota}(t)z_0\|^p+6^{p-1}\mathbb{E}\bigg\|\displaystyle\int_{0}^{t}\mathcal{S}_{\alpha, \gamma_\iota}(s)z_1ds\bigg\|^p\\&& +6^{p-1}\mathbb{E}\bigg\|\sum \limits_{\iota=1}^{m}\beta_{\iota}\int_{0}^{t}\frac{(t-s)^{\alpha-\gamma_{\iota}}}{\Gamma(1+\alpha-\gamma_{\iota})}\mathcal{S}_{\alpha, \gamma_\iota}(s)z_0ds\bigg\|^p\\&& +6^{p-1}\mathbb{E}\bigg\|\int_{0}^{t} \mathcal{J}_{\alpha, \gamma_\iota}(t-s)Eu(s)ds\bigg\|^p \\&&+6^{p-1}\mathbb{E}\bigg\|\int_{0}^{t}\mathcal{J}_{\alpha, \gamma_\iota}(t-s) g_1\big(s,z^h(s)\big)ds\bigg\|^p \\&&+6^{p-1}\mathbb{E}\bigg\|\int_{0}^{t}\mathcal{J}_{\alpha, \gamma_\iota}(t-s)g_2\big(s,z^h(s)\big)d\upsilon(s)\bigg\|^p\\&\leq& 6^{p-1}S^p\mathbb{E}\|z_0\|^p+6^{p-1}S^p{t_1}\mathbb{E}\|z_1\|^p\\&&+6^{p-1}\sum \limits_{\iota=1}^{m}\beta_{\iota}\bigg(\frac{S}{\Gamma(1+\alpha-\gamma_{\iota})}\bigg)^p\frac{1}{(1+p \alpha-p \gamma_{\iota})}{t^{(1+\alpha-\gamma_{\iota})p}_1}\mathbb{E}\|z_0\|^p\\&&+6^{p-1}\mathbb{E}\bigg\|\int_{0}^{t} \mathcal{J}_{\alpha, \gamma_\iota}(t-s)Eu(s)ds\bigg\|^p \\&&+6^{p-1}\mathbb{E}\bigg\|\int_{0}^{t}\mathcal{J}_{\alpha, \gamma_\iota}(t-s) g_1\big(s,z^h(s)\big)ds\bigg\|^p \\&&+6^{p-1}C_p\mathbb{E}\bigg(\int_{0}^{t}\big\|\mathcal{J}_{\alpha, \gamma_\iota}(t-s)\big\|^2 \big\|g_2\big(s,z^h(s)\big)\big\|^2_{L_2^0}ds\bigg)^{\frac{p}{2}} \\&\leq& 6^{p-1}S^p\mathbb{E}\|z_0\|^p+6^{p-1}S^p{t_1}\mathbb{E}\|z_1\|^p\\&&+6^{p-1}\sum \limits_{\iota=1}^{m}\beta_{\iota}\bigg(\frac{S}{\Gamma(1+\alpha-\gamma_{\iota})}\bigg)^p\frac{1}{(1+p \alpha-p \gamma_{\iota})}{t^{(1+\alpha-\gamma_{\iota})p}_1}\mathbb{E}\|z_0\|^p\\&&+6^{p-1}\bigg(\frac{S}{\Gamma(1+\alpha)}\bigg)^p\bigg(\frac{p-1}{\alpha p+p-1}\bigg)^p{t^{\alpha p+p-1}_1}\|E\|^p\|u\|^p_{L_{\Upsilon}^p}\\&&+6^{p-1}\bigg(\frac{S}{\Gamma(1+\alpha)}\bigg)^p\bigg(\frac{p-1}{\alpha p+p-1}\bigg)^p{t^{\alpha p+p-1}_1}h\int_{0}^{t}\mu_1(s)ds\\&&+ 6^{p-1}C_p\bigg(\frac{S}{\Gamma(1+\alpha)}\bigg)^p\bigg(\frac{p-2}{2\alpha p+p-2}\bigg)^{\frac{p-2}{p}}{t^{\frac{2\alpha p+p-2}{2}}_1}h\int_{0}^{t}\mu_2(s)ds.
\end{eqnarray*}
For $t\in(t_q,e_q],q=1,2,\ldots,r,$ we obtain
\begin{eqnarray*}
	h<\mathbb{E}\|\mathcal{F}(z^h)(t)\|^p=\mathbb{E}\|\varsigma_q (t,z^h(t_q^-))\|^p\leq \tilde{a}_q(1+h).
\end{eqnarray*}
Similarly, for $t\in(e_q,t_{q+1}],q=1,2,\ldots,r,$ we obtain
\begin{eqnarray*}
		h<\mathbb{E}\|\mathcal{F}(z^h)(t)\|^p&\leq& 6^{p-1}\mathbb{E}\|\mathcal{S}_{\alpha, \gamma_\iota}(t-e_q)\varsigma_q\big(e_q,z^{h}(t_q)\big)\|^p\\&&+6^{p-1}\mathbb{E}\bigg\|\int_{e_q}^{t}\mathcal{S}_{\alpha, \gamma_\iota}(s-e_q)\varphi_q\big(e_q,z^h(t_q)\big)ds\bigg\|^p\\&& +6^{p-1}\mathbb{E}\bigg\|\sum \limits_{\iota=1}^{m}\beta_{\iota}\int_{e_q}^{t}\frac{(t-s)^{\alpha-\gamma_{\iota}}}{\Gamma(1+\alpha-\gamma_{\iota})}\mathcal{S}_{\alpha, \gamma_\iota}(s-e_q)\varsigma_q\big(e_q,z^h(t_q)\big)ds\bigg\|^p\\&&+6^{p-1}\mathbb{E}\bigg\|\int_{e_q}^{t} \mathcal{J}_{\alpha, \gamma_\iota}(t-s)Eu(s)ds\bigg\|^p \\&&+6^{p-1}\mathbb{E}\bigg\|\int_{e_q}^{t}\mathcal{J}_{\alpha, \gamma_\iota}(t-s) g_1\big(s,z^h(s)\big)ds\bigg\|^p \\&&+6^{p-1}\mathbb{E}\bigg\|\int_{e_q}^{t}\mathcal{J}_{\alpha, \gamma_\iota}(t-s)g_2\big(s,z^h(s)\big)d\upsilon(s)\bigg\|^p\\&\leq&6^{p-1}S^p\mathbb{E}\big\|\varsigma_q\big(e_q,z^{h}(t_q)\big)\big\|^p+6^{p-1}S^p\int_{e_q}^{t}\mathbb{E}\big\|\varphi_q\big(e_q,z^h(t_q)\big)\big\|^pds\\&& +6^{p-1}\mathbb{E}\bigg\|\sum \limits_{\iota=1}^{m}\beta_{\iota}\int_{e_q}^{t}\frac{(t-s)^{\alpha-\gamma_{\iota}}}{\Gamma(1+\alpha-\gamma_{\iota})}\mathcal{S}_{\alpha, \gamma_\iota}(s-e_q)\varsigma_q\big(e_q,z^h(t_q)\big)ds\bigg\|^p\\&&+6^{p-1}\mathbb{E}\bigg\|\int_{e_q}^{t} \mathcal{J}_{\alpha, \gamma_\iota}(t-s)Eu(s)ds\bigg\|^p \\&&+6^{p-1}\mathbb{E}\bigg\|\int_{e_q}^{t}\mathcal{J}_{\alpha, \gamma_\iota}(t-s) g_1\big(s,z^h(s)\big)ds\bigg\|^p \\&&+6^{p-1}C_p\mathbb{E}\bigg(\int_{e_q}^{t}\big\|\mathcal{J}_{\alpha, \gamma_\iota}(t-s)\big\|^2 \big\|g_2\big(s,z^h(s)\big)\big\|^2_{L_2^0}ds\bigg)^{\frac{p}{2}}\\&\leq& 6^{p-1}S^p\tilde{a}_q(1+h)+6^{p-1}S^pt_{q+1}\tilde{b}_q(1+h)\\&&+6^{p-1}\sum \limits_{\iota=1}^{m}\beta_{\iota}\bigg(\frac{S}{\Gamma(1+\alpha-\gamma_{\iota})}\bigg)^p\frac{1}{(1+p \alpha-p \gamma_{\iota})}{t^{(1+\alpha-\gamma_{\iota})p}_{q+1}}\tilde{a}_q(1+h)\\&&+6^{p-1}\bigg(\frac{S}{\Gamma(1+\alpha)}\bigg)^p\bigg(\frac{p-1}{\alpha p+p-1}\bigg)^p{t^{\alpha p+p-1}_{q+1}}\|E\|^p\|u\|^p_{L_{\Upsilon}^p}\\&&+6^{p-1}\bigg(\frac{S}{\Gamma(1+\alpha)}\bigg)^p\bigg(\frac{p-1}{\alpha p+p-1}\bigg)^p{t^{\alpha p+p-1}_{q+1}}h\int_{0}^{t}\mu_1(s)ds\\&&+ 6^{p-1}C_p\bigg(\frac{S}{\Gamma(1+\alpha)}\bigg)^p\bigg(\frac{p-2}{2\alpha p+p-2}\bigg)^{\frac{p-2}{p}}{t^{\frac{2\alpha p+p-2}{2}}_{q+1}}h\int_{0}^{t}\mu_2(s)ds.
\end{eqnarray*}
For any $t\in[0,\ell],$ we obtain
\begin{eqnarray*}
	h<\mathbb{E}\|\mathcal{F}(z^h)(t)\|^p&\leq& \mathcal{K}^*+\tilde{a}_qh+6^{p-1}S^p\tilde{a}_qh+6^{p-1}S^p\ell\tilde{b}_qh\\&&\quad+6^{p-1}\sum \limits_{\iota=1}^{m}\beta_{\iota}\bigg(\frac{S}{\Gamma(1+\alpha-\gamma_{\iota})}\bigg)^p\frac{1}{(1+p \alpha-p \gamma_{\iota})}{\ell^{(1+\alpha-\gamma_{\iota})p}}\tilde{a}_qh\\&&\quad+6^{p-1}\bigg(\frac{S}{\Gamma(1+\alpha)}\bigg)^p\bigg(\frac{p-1}{\alpha p+p-1}\bigg)^p{\ell^{\alpha p+p-1}}h\int_{0}^{t}\mu_1(s)ds\\&&\quad+ 6^{p-1}C_p\bigg(\frac{S}{\Gamma(1+\alpha)}\bigg)^p\bigg(\frac{p-2}{2\alpha p+p-2}\bigg)^{\frac{p-2}{p}}{\ell^{\frac{2\alpha p+p-2}{2}}}h\int_{0}^{t}\mu_2(s)ds,
\end{eqnarray*} 
where \begin{eqnarray*}\mathcal{K}^*&=&\max\limits_{1\leq q\leq r}\bigg\{6^{p-1}\sum \limits_{\iota=1}^{m}\beta_{\iota}\bigg(\frac{S}{\Gamma(1+\alpha-\gamma_{\iota})}\bigg)^p\frac{1}{(1+p \alpha-p \gamma_{\iota})}{\ell^{(1+\alpha-\gamma_{\iota})p}}\big(\mathbb{E}\|z_0\|^p+\tilde{a}_q\big)\\&&\hspace{1.5cm}+6^{p-1}\bigg(\frac{S}{\Gamma(1+\alpha)}\bigg)^p\bigg(\frac{p-1}{\alpha p+p-1}\bigg)^p{\ell^{\alpha p+p-1}}\|E\|^p\|u\|^p_{L_{\Upsilon}^p}\\&&\hspace{1.5cm}+6^{p-1}S^p\big(\mathbb{E}\|z_0\|^p+\tilde{a}_q+\ell\tilde{b}_q\big)+6^{p-1}S^p{\ell}\mathbb{E}\|z_1\|^p+\tilde{a}_q\bigg\}.
\end{eqnarray*} 
Here, $\mathcal{K}^*$ is independent of $h,$ both sides are dividing by $h$ and taking $h\rightarrow \infty,$ we obtain
\begin{eqnarray*}
	1&<&\tilde{a}_q+6^{p-1}S^p\tilde{a}_q+6^{p-1}S^p\ell\tilde{b}_q\\&&\quad+6^{p-1}\sum \limits_{\iota=1}^{m}\beta_{\iota}\bigg(\frac{S}{\Gamma(1+\alpha-\gamma_{\iota})}\bigg)^p\frac{1}{(1+p \alpha-p \gamma_{\iota})}{\ell^{(1+\alpha-\gamma_{\iota})p}}\tilde{a}_q\\&&\quad+6^{p-1}\bigg(\frac{S}{\Gamma(1+\alpha)}\bigg)^p\bigg(\frac{p-1}{\alpha p+p-1}\bigg)^p{\ell^{\alpha p+p-1}}\int_{0}^{t}\mu_1(s)ds\\&&\quad+ 6^{p-1}C_p\bigg(\frac{S}{\Gamma(1+\alpha)}\bigg)^p\bigg(\frac{p-2}{2\alpha p+p-2}\bigg)^{\frac{p-2}{p}}{\ell^{\frac{2\alpha p+p-2}{2}}}\int_{0}^{t}\mu_2(s)ds,
\end{eqnarray*} 
which contradicts to (C4). Hence, $ \mathcal{F}(\Omega_h)\subset \Omega_h,$ for some $h>0.$\\
Step 2. $\mathcal{F}_1$ is a contraction map on $\Omega_h.$ For any $y,z\in \Omega_h,$ if $t\in[0,t_1],$ then we have  
\begin{eqnarray}\label{2.7}
	\mathbb{E}\|(\mathcal{F}_1 y)(t)-(\mathcal{F}_1 z)(t)\|^p=0.
\end{eqnarray}
If $t\in(t_q,e_q],~q=1,2,\ldots,r,$ then we obtain
\begin{eqnarray}\label{2.8}
	\mathbb{E}\|(\mathcal{F}_1 y)(t)-(\mathcal{F}_1 z)(t)\|^p&=& 	\mathbb{E}\big\|\varsigma_q\big(t,y(t_q^-)\big)-\varsigma_q\big(t,z(t_q^-)\big)\big\|^p\nonumber\\&\leq& a_q\|y-z\|^p_{\mathcal{PC}}.
\end{eqnarray}
Similarly, if $t\in(e_q,t_{q+1}],~q=1,2,\ldots,r,$ then we obtain\\
$\mathbb{E}\|(\mathcal{F}_1 y)(t)-(\mathcal{F}_1 z)(t)\|^p$
\begin{eqnarray}\label{2.9}
&\leq& 3^{p-1}\mathbb{E}\bigg\|	\mathcal{S}_{\alpha, \gamma_\iota}(t-e_q)\big[\varsigma_q\big(e_q,y(t_q)\big)-\varsigma_q\big(e_q,z(t_q)\big)\big]\bigg\|^p\nonumber\\&&\quad+3^{p-1}\mathbb{E}\bigg\|\int_{e_q}^{t}\mathcal{S}_{\alpha, \gamma_\iota}(s-e_q)\big[\varphi_q\big(e_q,y(t_q)\big)-\varphi_q\big(e_q,z(t_q)\big)\big]ds\bigg\|^p\nonumber\\&&\quad +3^{p-1}\mathbb{E}\bigg\|\sum \limits_{\iota=1}^{m}\beta_{\iota}\int_{e_q}^{t}\frac{(t-s)^{\alpha-\gamma_{\iota}}}{\Gamma(1+\alpha-\gamma_{\iota})}\mathcal{S}_{\alpha, \gamma_\iota}(s-e_q)\big[\varsigma_q\big(e_q,y(t_q)\big)-\varsigma_q\big(e_q,z(t_q)\big)\big]ds\bigg\|^p\nonumber\\&\leq&3^{p-1}\bigg[S^pa_q+S^p{t_{q+1}}b_q+\sum \limits_{\iota=1}^{m}\beta_{\iota}\bigg(\frac{S}{\Gamma(1+\alpha-\gamma_{\iota})}\bigg)^p\frac{t^{(1+\alpha-\gamma_{\iota})p}_{q+1}}{(1+p \alpha-p \gamma_{\iota})}a_q\bigg]\|y-z\|^p_{\mathcal{PC}}.
\end{eqnarray}
From (\ref{2.7}) to (\ref{2.9}), we obtain
\begin{eqnarray*}
	\mathbb{E}\|(\mathcal{F}_1 y)(t)-(\mathcal{F}_1 z)(t)\|^p\leq \pounds_{\mathcal{F}_1}\|y-z\|^p_{\mathcal{PC}},
\end{eqnarray*}
where $\pounds_{\mathcal{F}_1}=\max\limits_{1\leq q\leq r}\bigg\{a_q,~ 3^{p-1}\bigg(S^pa_q+S^p{\ell}b_q+\sum \limits_{\iota=1}^{m}\beta_{\iota}\Big(\frac{S}{\Gamma(1+\alpha-\gamma_{\iota})}\Big)^p\frac{\ell^{(1+\alpha-\gamma_{\iota})p}}{(1+p \alpha-p \gamma_{\iota})}a_q\bigg)\bigg\}.$ By (C4), we see that $\pounds_{\mathcal{F}_1}<1.$ Hence, $\mathcal{F}_1$ is a contraction map.\\
Step 3. We prove that $\mathcal{F}_2$ is continuous on $\Omega_h$. Let $\{z^n\}_{n=1}^{\infty}$ be a sequence such that $z^n\rightarrow \hat{z}$ in $\Omega_h$ as $n \rightarrow \infty.$ By (C2), we have
\begin{eqnarray*} g_1(s,z^n(s))\rightarrow g_1(s,\hat z(s)),\\
	g_2(s,z^n(s))\rightarrow g_2(s,\hat z(s)),
~\mbox{as}~ n \rightarrow \infty,\end{eqnarray*}
for any $s\in[0,t]$ and \begin{eqnarray*}\mathbb{E}\|g_1(s,z^n)- g_1(s,\hat z)\|^p\leq 2h\mu_1(s),\\
\mathbb{E}\|g_2(s,z^n)- g_2(s,\hat z)\|^p_{L_2^0}\leq 2h\mu_2(s).\end{eqnarray*}
 For any $t\in(e_q,t_{q+1}],~q=0,1,2,\ldots,r,$ we obtain\\
 $	\mathbb{E}\|(\mathcal{F}_2 z^n)(t)-(\mathcal{F}_2\hat z)(t)\|^p	$
 \begin{eqnarray*}
&\leq& 2^{p-1}\mathbb{E}\bigg\|\int_{e_q}^{t}\mathcal{J}_{\alpha, \gamma_\iota}(t-s) \big[g_1\big(s,z^n(s)\big)-g_1\big(s,\hat z(s)\big)\big]ds\bigg\|^p\\&&\quad+2^{p-1}\mathbb{E}\bigg\|\int_{e_q}^{t}\mathcal{J}_{\alpha, \gamma_\iota}(t-s)\big[g_2\big(s,z^n(s)\big)-g_2\big(s,\hat z(s)\big)\big]d\upsilon(s)\bigg\|^p\\&\leq&2^{p-1}\mathbb{E}\bigg\|\int_{e_q}^{t}\mathcal{J}_{\alpha, \gamma_\iota}(t-s) \big[g_1\big(s,z^n(s)\big)-g_1\big(s,\hat z(s)\big)\big]ds\bigg\|^p\\&&\quad+2^{p-1}C_p\mathbb{E}\bigg(\int_{e_q}^{t}\big\|\mathcal{J}_{\alpha, \gamma_\iota}(t-s)\|^2\big\|g_2\big(s,z^n(s)\big)-g_2\big(s,\hat z(s)\big)\big\|^2_{L_2^0}ds\bigg)^{\frac{p}{2}}\\&\leq& 2^{p-1}\bigg(\frac{S}{\Gamma(1+\alpha)}\bigg)^p\bigg(\frac{p-1}{\alpha p+p-1}\bigg)^p{t^{\alpha p+p-1}_{q+1}}\int_{e_q}^{t}\mathbb{E}\big\|g_1\big(s,z^n(s)\big)-g_1\big(s,\hat z(s)\big)\big\|^pds\\&&\quad+2^{p-1}C_p\bigg(\frac{S}{\Gamma(1+\alpha)}\bigg)^p\bigg(\frac{p-2}{2\alpha p+p-2}\bigg)^{\frac{p-2}{p}}{t^{\frac{2\alpha p+p-2}{2}}_{q+1}}\int_{e_q}^{t}\mathbb{E}\big\|g_2\big(s,z^n(s)\big)-g_2\big(s,\hat z(s)\big)\big\|^pds.
\end{eqnarray*}	
By the Lebesgue dominated convergence theorem, we obtain
$$\|\mathcal{F}_2 z^n-\mathcal{F}_2\hat z\|^p_{\mathcal{PC}} \rightarrow 0 ~\mbox{as}~ n \rightarrow \infty.$$
Thus, $\mathcal{F}_2$ is continuous on $\Omega_h.$\\
Step 4. We show that $\{\mathcal{F}_2z:z\in \Omega_h\}$ is equicontinuous.\\
\noindent Let $z\in \Omega_h$ and $\tau_1,\tau_2\in (e_q,t_{q+1}].$ Then, if $e_q<\tau_1<\tau_2\leq t_{q+1},~q=0,1,2,\ldots,r,$ we obtain\\

$\mathbb{E}\|(\mathcal{F}_2z)(\tau_1)-(\mathcal{F}_2z)(\tau_2)\|^p$
\begin{eqnarray*}
	&\leq&6^{p-1}\mathbb{E}\bigg\|	\int_{\tau_1}^{\tau_2} \mathcal{J}_{\alpha, \gamma_\iota}(\tau_2-s)Eu(s)ds\bigg\|^p +6^{p-1}\mathbb{E}\bigg\|\int_{\tau_1}^{\tau_2}\mathcal{J}_{\alpha, \gamma_\iota}(\tau_2-s) g_1\big(s,z(s)\big)ds\bigg\|^p \\&&\quad+6^{p-1}\mathbb{E}\bigg\|\int_{\tau_1}^{\tau_2}\mathcal{J}_{\alpha, \gamma_\iota}(\tau_2-s)g_2\big(s,z(s)\big)d\upsilon(s)\bigg\|^p\\&& \quad+6^{p-1}\mathbb{E}\bigg\|\int_{e_q}^{\tau_1} \big[\mathcal{J}_{\alpha, \gamma_\iota}(\tau_2-s)-\mathcal{J}_{\alpha, \gamma_\iota}(\tau_1-s)\big]Eu(s)ds\bigg\|^p\\&&\quad +6^{p-1}\mathbb{E}\bigg\|\int_{e_q}^{\tau_1} \big[\mathcal{J}_{\alpha, \gamma_\iota}(\tau_2-s)-\mathcal{J}_{\alpha, \gamma_\iota}(\tau_1-s)\big]g_1\big(s,z(s)\big)ds\bigg\|^p \\&&\quad+6^{p-1}\mathbb{E}\bigg\|\int_{e_q}^{\tau_1} \big[\mathcal{J}_{\alpha, \gamma_\iota}(\tau_2-s)-\mathcal{J}_{\alpha, \gamma_\iota}(\tau_1-s)\big]g_2\big(s,z(s)\big)d\upsilon(s)\bigg\|^p\\
	&\leq&6^{p-1}\big[\mathcal{I}_1+\mathcal{I}_2+\mathcal{I}_3+\mathcal{I}_4+\mathcal{I}_5+\mathcal{I}_6\big]. 
\end{eqnarray*}
For terms $\mathcal{I}_i,~i=1,2$ and $3,$ we have
\begin{eqnarray*}
	\mathcal{I}_1&\leq&\bigg(\frac{S}{\Gamma(1+\alpha)}\bigg)^p\bigg(\frac{p-1}{\alpha p+p-1}\bigg)^p{t^{\alpha p+p-1}_{q+1}}\|E\|^p\mathbb{E}\int_{\tau_1}^{\tau_2}\|u(s)\|^pds \rightarrow 0~ \mbox{as}~\tau_2 \rightarrow \tau_1,\\
	\mathcal{I}_2&\leq& \bigg(\frac{S}{\Gamma(1+\alpha)}\bigg)^p\bigg(\frac{p-1}{\alpha p+p-1}\bigg)^p{t^{\alpha p+p-1}_{q+1}}h\int_{\tau_1}^{\tau_2}\mu_1(s)ds \rightarrow 0~ \mbox{as}~\tau_2 \rightarrow \tau_1,\\	\mathcal{I}_3&\leq& C_p\bigg(\frac{S}{\Gamma(1+\alpha)}\bigg)^p\bigg(\frac{p-2}{2\alpha p+p-2}\bigg)^{\frac{p-2}{p}}{t^{\frac{2\alpha p+p-2}{2}}_{q+1}}h\int_{\tau_1}^{\tau_2}\mu_2(s)ds \rightarrow 0~ \mbox{as}~\tau_2 \rightarrow \tau_1.
\end{eqnarray*}
For terms $\mathcal{I}_i,~i=4,5$ and $6,$ we have
\begin{eqnarray*}
		\mathcal{I}_4&\leq&\|E\|^p\mathbb{E}\int_{e_q}^{\tau_1} \big\|\mathcal{J}_{\alpha, \gamma_\iota}(\tau_2-s)-\mathcal{J}_{\alpha, \gamma_\iota}(\tau_1-s)\big\|^p\|u(s)\|^pds,\\\mathcal{I}_5&\leq&h\int_{e_q}^{\tau_1} \big\|\mathcal{J}_{\alpha, \gamma_\iota}(\tau_2-s)-\mathcal{J}_{\alpha, \gamma_\iota}(\tau_1-s)\big\|^p\mu_1(s)ds,\\\mathcal{I}_6&\leq&C_ph\int_{e_q}^{\tau_1} \big\|\mathcal{J}_{\alpha, \gamma_\iota}(\tau_2-s)-\mathcal{J}_{\alpha, \gamma_\iota}(\tau_1-s)\big\|^p\mu_2(s)ds.
\end{eqnarray*}
We see that the right-hand side of $\mathcal{I}_i\rightarrow 0$ as $\tau_2 \rightarrow \tau_1,~i=4,5,$ and $6,$ since the compactness of $\mathcal{J}_{\alpha, \gamma_\iota}(t)$ implies the continuity in the uniform operator topology. Hence, $\{\mathcal{F}_2z:z\in \Omega_h\}$ is equicontinuous. Moreover, $\{\mathcal{F}_2z:z\in \Omega_h\}$ is bounded.\\
Step 5. We show that $\mathcal{V}(t)=\{(\mathcal{F}_2z)(t):z\in \Omega_h\}$ is relatively compact in $\mathcal{Z}.$ Obviously, $\mathcal{V}(0)=\{0\}$ is compact. Let $\zeta$ be a real number and $t\in(e_q,t_{q+1}]$ be fixed with $0<\zeta<t.$ For $z\in \Omega_h,$ we define for all $t\in(e_q,t_{q+1}],~q=0,1,2,\ldots,r,$
\begin{eqnarray*}
	(\mathcal{F}^{\zeta}_2 z)(t)=	\left \{ \begin{array}{lll}
	 \displaystyle	\int_{0}^{t-\zeta} \mathcal{J}_{\alpha, \gamma_\iota}(t-s)Eu(s)ds +\displaystyle	\int_{0}^{t-\zeta} \mathcal{J}_{\alpha, \gamma_\iota}(t-s) g_1\big(s,z(s)\big)ds\nonumber \\+\displaystyle	\int_{0}^{t-\zeta} \mathcal{J}_{\alpha, \gamma_\iota}(t-s)g_2\big(s,z(s)\big)d\upsilon(s),  \hspace{2.4cm}t\in[0,t_1], q=0,&\\ 0, \hspace{7.7cm} t\in (t_q,e_q], q=1,2,\ldots,r,
		&\\	\displaystyle	\int_{e_q}^{t-\zeta}  \mathcal{J}_{\alpha, \gamma_\iota}(t-s)Eu(s)ds +\displaystyle	\int_{e_q}^{t-\zeta} \mathcal{J}_{\alpha, \gamma_\iota}(t-s) g_1\big(s,z(s)\big)ds\nonumber \\+\displaystyle	\int_{e_q}^{t-\zeta} \mathcal{J}_{\alpha, \gamma_\iota}(t-s)g_2\big(s,z(s)\big)d\upsilon(s), \hspace{2.5cm} t\in(e_q,t_{q+1}], q=1,2,\ldots,r.
	\end{array}\right.
\end{eqnarray*}
Since $\mathcal{J}_{\alpha, \gamma_\iota}(t)$ is compact, the set $\mathcal{V}^{\zeta}(t)=\big\{\big(\mathcal{F}^{\zeta}_2z\big)(t):z\in \Omega_h\big\}$ is relatively compact in $\mathcal{Z}$ for every $\zeta.$ Now, for each $0<\zeta<t$ and $t\in (e_q,t_{q+1}],~q=0,1,2,\ldots,r,$ we obtain\\
$\mathbb{E}\big\|(\mathcal{F}_2z)(t)-\big(\mathcal{F}^{\zeta}_2z\big)(t)\big\|^p$
\begin{eqnarray*}
	&\leq& 3^{p-1}\mathbb{E}\bigg\|\displaystyle	\int_{e_q}^{t}  \mathcal{J}_{\alpha, \gamma_\iota}(t-s)Eu(s)ds-\displaystyle	\int_{e_q}^{t-\zeta}  \mathcal{J}_{\alpha, \gamma_\iota}(t-s)Eu(s)ds\bigg\|^p\\&&\quad +3^{p-1}\mathbb{E}\bigg\|\displaystyle	\int_{e_q}^{t} \mathcal{J}_{\alpha, \gamma_\iota}(t-s) g_1\big(s,z(s)\big)ds-\displaystyle	\int_{e_q}^{t-\zeta} \mathcal{J}_{\alpha, \gamma_\iota}(t-s) g_1\big(s,z(s)\big)ds\bigg\|^p \\&&\quad+3^{p-1}\mathbb{E}\bigg\|\displaystyle	\int_{e_q}^{t} \mathcal{J}_{\alpha, \gamma_\iota}(t-s)g_2\big(s,z(s)\big)d\upsilon(s)-\displaystyle	\int_{e_q}^{t-\zeta} \mathcal{J}_{\alpha, \gamma_\iota}(t-s)g_2\big(s,z(s)\big)d\upsilon(s)\bigg\|^p\\&\leq&3^{p-1}\bigg(\frac{S}{\Gamma(1+\alpha)}\bigg)^p\bigg(\frac{p-1}{\alpha p+p-1}\bigg)^p{t^{\alpha p+p-1}_{q+1}}\bigg[\|E\|^p\mathbb{E}\displaystyle	\int_{t-\zeta}^{t} \|u(s)\|^pds+h\displaystyle	\int_{t-\zeta}^{t}\mu_1(s)ds\bigg]\\&&\quad+3^{p-1}C_p\bigg(\frac{S}{\Gamma(1+\alpha)}\bigg)^p\bigg(\frac{p-2}{2\alpha p+p-2}\bigg)^{\frac{p-2}{p}}{t^{\frac{2\alpha p+p-2}{2}}_{q+1}}h\displaystyle	\int_{t-\zeta}^{t}\mu_2(s)ds\rightarrow 0~\mbox{as}~ \zeta \rightarrow 0.
\end{eqnarray*} 
As a result, $\mathcal{V}(t)$ is relatively compact in $\mathcal{Z}.$  $\mathcal{F}_2 $ is completely continuous according to the Arzela-Ascoli theorem and steps 3 to 5. Hence, by Krasnoselskii's fixed point theorem \cite{rrr}, $\mathcal{F}$ has at least one fixed point on $\Omega_h,$ which is a mild solution of the system (\ref{1.1}). 
\end{proof}
	
\section{\textbf{Existence of Stochastic Optimal Controls}}
Let $\mathfrak{U}_z(u)$ represent the collection of all solutions of (\ref{1.1}) and $z^u$ denotes the mild solution of (\ref{1.1}) with respect to $u\in U_{ad}.$ The Lagrange problem is used to find an optimal state-control pair $(z^*,u^*)\in \mathcal{PC(Z)} \times {U}_{ad}$ satisfying \cite {r1}
$$	\mathcal{I}(z^*,u^*)\leq \mathcal{I}(z^u,u),\forall~ u\in {U}_{ad},$$
where 
\begin{eqnarray} \label{05.1}
	\mathcal{I}(z^u,u)=\mathbb{E} \bigg\{\int_{0}^{\ell}\tilde{\mathcal{G}}\big(t,z^u(t),u(t)\big)dt\bigg\}.
\end{eqnarray}
The following hypotheses are used to discuss the Lagrange problem:
\begin{itemize}
	\item [(C5)] The Borel measurable function $\tilde {\mathcal{G}}:[0, \ell] \times \mathcal{Z} \times \mathcal{U} \rightarrow \mathbb{R}\cup \{\infty\}$ fulfills
	\subitem(a) For almost all $t\in[0,\ell],~\tilde {\mathcal{G}}(t,z,\cdot)$ is convex on $\mathcal{U}$ for each $z \in \mathcal{Z}.$
	\subitem(b) For almost all $t\in[0,\ell],~\tilde {\mathcal{G}}(t,\cdot,\cdot)$ is sequentially lower semicontinuous on $\mathcal{Z} \times\mathcal{U}.$ 
	\subitem(c) There exist constants $h_1\geq 0,~~h_2>0$ and $\Phi$ is a non-negative function in $\mathcal{L}^1([0,\ell],\mathbb{R})$ such that
	$$\tilde{\mathcal{G}}\big(t,z(t),u(t)\big)\geq \Phi(t)+h_1\|z\|^p+h_2\|u\|^p_{\mathcal{U}}.$$
\end{itemize}
Denote $$\Delta\geq \max\limits_{1\leq q\leq r}\bigg[(x_1+x_{10})\exp(x_{30}),~\frac{\tilde{a}_q}{1-\tilde{a}_q},~(x_{2q}+x_{1q})(1+\tilde{\pounds})^q\exp(x_{3q})\bigg],$$
where $x_1=6^{p-1}S^p\mathbb{E}\|z_0\|^p+6^{p-1}S^pt_1\mathbb{E}\|z_1\|^p+6^{p-1}\sum \limits_{\iota=1}^{m}\beta_{\iota}\Big(\frac{S}{\Gamma(1+\alpha-\gamma_{\iota})}\Big)^p\frac{1}{(1+p \alpha-p \gamma_{\iota})}{t^{(1+\alpha-\gamma_{\iota})p}_1}\mathbb{E}\|z_0\|^p,$\\ $x_{1q}=6^{p-1}\Big(\frac{S}{\Gamma(1+\alpha)}\Big)^p\Big(\frac{p-1}{\alpha p+p-1}\Big)^p{t^{\alpha p+p-1}_{q+1}}\|E\|^p\|u\|^p_{L_{\Upsilon}^p},$\\ $x_{3q}=6^{p-1}\Big(\frac{S}{\Gamma(1+\alpha)}\Big)^p\bigg\{\Big(\frac{p-1}{\alpha p+p-1}\Big)^p{t^{\alpha p+p-1}_{q+1}}+\Big(\frac{p-2}{2\alpha p+p-2}\Big)^{\frac{p-2}{p}}{t^{\frac{2\alpha p+p-2}{2}}_{q+1}}\bigg\}\|\mu_1(s)+\mu_2(s)\|_{L^1},$  \\ $x_{2q}=6^{p-1}S^p\tilde{a}_q+6^{p-1}{t_{q+1}}S^p\tilde{b}_q+6^{p-1}\sum \limits_{\iota=1}^{m}\beta_{\iota}\Big(\frac{S}{\Gamma(1+\alpha-\gamma_{\iota})}\Big)^p\frac{1}{(1+p \alpha-p \gamma_{\iota})}{t^{(1+\alpha-\gamma_{\iota})p}_{q+1}}\tilde{a}_q,\quad$ $\tilde{a_q}<1,$ and\\  $\tilde{\pounds}=\sup\limits_{1\leq k\leq r}6^{p-1}S^p\bigg[\tilde{a}_k+{t_{q+1}}\tilde{b}_k+\sum \limits_{\iota=1}^{m}\beta_{\iota}\Big(\frac{1}{\Gamma(1+\alpha-\gamma_{\iota})}\Big)^p\frac{1}{(1+p \alpha-p \gamma_{\iota})}{t^{(1+\alpha-\gamma_{\iota})p}_{q+1}}\tilde{a}_k\bigg].$
\begin{lemma}\label{3.1}
	If the hypotheses (C1)-(C4) are satisfied, then for any mild solution $z$ of (\ref{1.1}) there exists a constant $\Delta>0$ such that $\|z\|^p_{\mathcal{PC}}\leq \Delta$ for given $u\in U_{ad}.$ 
\end{lemma}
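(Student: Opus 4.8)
The plan is to prove the a priori bound interval by interval along the partition $[0,t_1],(t_1,e_1],(e_1,t_2],\dots,(t_r,e_r],(e_r,t_{r+1}]$, closing each continuous-dynamics estimate with Gronwall's inequality and propagating the constant by induction on $q$. On $[0,t_1]$ I would apply $\mathbb{E}\|\cdot\|^p$ to the first branch of \eqref{2.1}, use $\|\sum_{i=1}^{6}a_i\|^p\le 6^{p-1}\sum_{i=1}^{6}\|a_i\|^p$, estimate the deterministic integrals by H\"older's inequality together with $S=\sup_t\|\mathcal{S}_{\alpha,\gamma_\iota}(t)\|$ and the bound on $\|\mathcal{J}_{\alpha,\gamma_\iota}(t)z\|$, bound the control term by $\|E\|^p\|u\|^p_{L^p_\Upsilon}$ (finite because $u\in U_{ad}$), and handle the stochastic convolution by Lemma~\ref{lm1} followed by H\"older's inequality in $L^{p/2}$ to replace $\big(\int_0^t\|\mathcal{J}_{\alpha,\gamma_\iota}(t-s)\|^2\|g_2(s,z(s))\|_{L_2^0}^2\,ds\big)^{p/2}$ by a constant multiple of $\int_0^t\mathbb{E}\|g_2(s,z(s))\|_{L_2^0}^p\,ds$. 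Then (C2)(b) yields $\mathbb{E}\|z(t)\|^p\le x_1+x_{10}+C\int_0^t(\mu_1(s)+\mu_2(s))\,\mathbb{E}\|z(s)\|^p\,ds$ on $[0,t_1]$, where $C$ is chosen so that $C\|\mu_1+\mu_2\|_{L^1}=x_{30}$; Gronwall's inequality then gives $\sup_{t\in[0,t_1]}\mathbb{E}\|z(t)\|^p\le (x_1+x_{10})\exp(x_{30})$, which is the first term in the definition of $\Delta$.

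Next, on an impulse interval $(t_q,e_q]$ one has $z(t)=\varsigma_q(t,z(t_q^-))$, so (C3) gives $\mathbb{E}\|z(t)\|^p\le \tilde a_q(1+\mathbb{E}\|z(t_q^-)\|^p)$; if the already-obtained bound on the preceding interval is at most a candidate constant $M$, then $\tilde a_q(1+M)\le M$ whenever $M\ge \tilde a_q/(1-\tilde a_q)$ (using $\tilde a_q<1$), which is precisely the second term in $\Delta$. On the subsequent continuous interval $(e_q,t_{q+1}]$ I would repeat the $[0,t_1]$ estimate using the third branch of \eqref{2.1}: the terms carrying $\varsigma_q(e_q,z(t_q))$ and $\varphi_q(e_q,z(t_q))$ are controlled by (C3) via $\tilde a_q(1+\mathbb{E}\|z(t_q)\|^p)$ and $\tilde b_q(1+\mathbb{E}\|z(t_q)\|^p)$, producing $\mathbb{E}\|z(t)\|^p\le x_{2q}(1+\mathbb{E}\|z(t_q)\|^p)+x_{1q}+C\int_{e_q}^t(\mu_1(s)+\mu_2(s))\,\mathbb{E}\|z(s)\|^p\,ds$; Gronwall's inequality then gives a bound of the form $(x_{2q}+x_{1q})(1+\tilde{\pounds})^{q}\exp(x_{3q})$ once the factor $(1+\tilde{\pounds})^q$ coming from the iterated impulse/junction contributions is tracked. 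Taking $\Delta$ to be the maximum over $q$ of these three expressions yields $\|z\|^p_{\mathcal{PC}}\le\Delta$.

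The main obstacle is the bookkeeping rather than any single hard estimate: (i) the stochastic convolution must be reduced, through Lemma~\ref{lm1} and a second application of H\"older in $L^{p/2}$, to an $L^1$-in-time bound driven by $\mu_2$, so that it enters the Gronwall kernel on the same footing as the $g_1$-term; and (ii) the constants must be propagated consistently across the $2r+1$ subintervals — each continuous interval contributing an $\exp(x_{3q})$ factor and each impulse/junction an $(1+\tilde{\pounds})$ factor — so that the single constant $\Delta$, namely the stated maximum, dominates $\mathbb{E}\|z(t)\|^p$ uniformly on $[0,\ell]$. Note that the compactness and continuity hypotheses (C1) and (C2)(a) play no role here; only the growth bounds (C2)(b) and (C3) and the finiteness of $\|u\|_{L^p_\Upsilon}$ for $u\in U_{ad}$ are used.
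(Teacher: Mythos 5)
Your proposal is correct and follows essentially the same route as the paper: interval-by-interval estimates with the $6^{p-1}$ splitting, H\"older's inequality, Lemma~\ref{lm1} for the stochastic convolution, the growth bounds (C2)(b) and (C3), and Gronwall's inequality, yielding exactly the three expressions whose maximum defines $\Delta$. The only cosmetic difference is that on $(e_q,t_{q+1}]$ the paper invokes the impulsive Gronwall inequality (Theorem 16.1 of Bainov--Simeonov) to produce the factor $(1+\tilde{\pounds})^q\exp(x_{3q})$ in one step, whereas you propagate the junction contributions by induction on $q$ -- the two are equivalent.
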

\begin{proof}
	Let $z$ be a mild solution of (\ref{1.1}) with respect to $u \in U_{ad}$ on $[0,\ell].$ The proof is divided into several cases.\\
	Case 1. For any $t\in[0,t_1],$ we have
	\begin{eqnarray*}
		\mathbb{E}\|z(t)\|^p&\leq&6^{p-1}\mathbb{E}\Big\|	\mathcal{S}_{\alpha, \gamma_\iota}(t)z_0\Big\|^p+6^{p-1}\mathbb{E}\bigg\|\int_{0}^{t}\mathcal{S}_{\alpha, \gamma_\iota}(s)z_1ds\bigg\|^p\\&& \quad+6^{p-1}\mathbb{E}\bigg\|\sum\limits_{\iota=1}^{m}\beta_{\iota}\int_{0}^{t}\frac{(t-s)^{\alpha-\gamma_{\iota}}}{\Gamma(1+\alpha-\gamma_{\iota})}\mathcal{S}_{\alpha, \gamma_\iota}(s)z_0ds\bigg\|^p\\&& \quad+6^{p-1}\mathbb{E}\bigg\|\int_{0}^{t} \mathcal{J}_{\alpha, \gamma_\iota}(t-s)Eu(s)ds\bigg\|^p +6^{p-1}\mathbb{E}\bigg\|\int_{0}^{t}\mathcal{J}_{\alpha, \gamma_\iota}(t-s) g_1\big(s,z(s)\big)ds\bigg\|^p \\&& \quad+6^{p-1}\mathbb{E}\bigg\|\int_{0}^{t}\mathcal{J}_{\alpha, \gamma_\iota}(t-s)g_2\big(s,z(s)\big)d\upsilon(s)\bigg\|^p
		\\&\leq& 6^{p-1}S^p\mathbb{E}\|z_0\|^p+6^{p-1}S^p{t_1}\mathbb{E}\|z_1\|^p\\&&\quad+6^{p-1}\sum \limits_{\iota=1}^{m}\beta_{\iota}\bigg(\frac{S}{\Gamma(1+\alpha-\gamma_{\iota})}\bigg)^p\frac{1}{(1+p \alpha-p \gamma_{\iota})}{t^{(1+\alpha-\gamma_{\iota})p}_1}\mathbb{E}\|z_0\|^p\\&&\quad+6^{p-1}\bigg(\frac{S}{\Gamma(1+\alpha)}\bigg)^p\bigg(\frac{p-1}{\alpha p+p-1}\bigg)^p{t^{\alpha p+p-1}_1}\|E\|^p\|u\|^p_{L_{\Upsilon}^p}\\&&\quad+6^{p-1}\bigg(\frac{S}{\Gamma(1+\alpha)}\bigg)^p\bigg(\frac{p-1}{\alpha p+p-1}\bigg)^p{t^{\alpha p+p-1}_1}\int_{0}^{t}\mu_1(s)\mathbb{E}\|z(s)\|^pds\\&&\quad+ 6^{p-1}C_p\bigg(\frac{S}{\Gamma(1+\alpha)}\bigg)^p\bigg(\frac{p-2}{2\alpha p+p-2}\bigg)^{\frac{p-2}{p}}{t^{\frac{2\alpha p+p-2}{2}}_1}\int_{0}^{t}\mu_2(s)\mathbb{E}\|z(s)\|^pds.
	\end{eqnarray*}
By Gronwall's inequality, we obtain
\begin{eqnarray*}
	\mathbb{E}\|z(t)\|^p&\leq&\bigg\{6^{p-1}S^p\mathbb{E}\|z_0\|^p+6^{p-1}S^p{t_1}\mathbb{E}\|z_1\|^p\\&&\quad+6^{p-1}\sum \limits_{\iota=1}^{m}\beta_{\iota}\bigg(\frac{S}{\Gamma(1+\alpha-\gamma_{\iota})}\bigg)^p\frac{1}{(1+p \alpha-p \gamma_{\iota})}{t^{(1+\alpha-\gamma_{\iota})p}_1}\mathbb{E}\|z_0\|^p\\&&\quad+6^{p-1}\bigg(\frac{S}{\Gamma(1+\alpha)}\bigg)^p\bigg(\frac{p-1}{\alpha p+p-1}\bigg)^p{t^{\alpha p+p-1}_1}\|E\|^p\|u\|^p_{L_{\Upsilon}^p}\bigg\}\\&&\quad\times\exp\bigg[6^{p-1}\bigg(\frac{S}{\Gamma(1+\alpha)}\bigg)^p\bigg\{\bigg(\frac{p-1}{\alpha p+p-1}\bigg)^p{t^{\alpha p+p-1}_1}\\&&\quad+\bigg(\frac{p-2}{2\alpha p+p-2}\bigg)^{\frac{p-2}{p}}{t^{\frac{2\alpha p+p-2}{2}}_1}\bigg\}\|\mu_1(s)+\mu_2(s)\|_{L^1}\bigg]\\&\leq&\Delta.
\end{eqnarray*}
Case 2. For any $t\in(t_q,e_q],~q=1,2,\ldots,r,$ we obtain
\begin{eqnarray*}
		\mathbb{E}\|z(t)\|^p&=&\mathbb{E}\|\varsigma_q (t,z(t_q^-))\|^p\leq \tilde{a}_q\big(1+\mathbb{E}\|z(t_q^-)\|^p\big),
\end{eqnarray*}
which yields that
\begin{eqnarray*}
	\|z\|^p_{\mathcal{PC}}\leq \frac{\tilde{a}_q}{1-\tilde{a}_q} \leq \Delta.
\end{eqnarray*}
Case 3. For any $t\in(e_q,t_{q+1}],~q=1,2,\ldots,r,$ we obtain
\begin{eqnarray*}
	\mathbb{E}\|z(t)\|^p&\leq&6^{p-1}\mathbb{E}\Big\|\mathcal{S}_{\alpha, \gamma_\iota}(t-e_q)\varsigma_q\big(e_q,z(t_q)\big)\Big\|^p+6^{p-1}\mathbb{E}\bigg\|\int_{e_q}^{t}\mathcal{S}_{\alpha, \gamma_\iota}(s-e_q)\varphi_q\big(e_q,z(t_q)\big)ds\bigg\|^p\\&&\quad +6^{p-1}\mathbb{E}\bigg\|\sum \limits_{\iota=1}^{m}\beta_{\iota}\int_{e_q}^{t}\frac{(t-s)^{\alpha-\gamma_{\iota}}}{\Gamma(1+\alpha-\gamma_{\iota})}\mathcal{S}_{\alpha, \gamma_\iota}(s-e_q)\varsigma_q\big(e_q,z(t_q)\big)ds\bigg\|^p\\&& \quad+6^{p-1}\mathbb{E}\bigg\|\int_{e_q}^{t} \mathcal{J}_{\alpha, \gamma_\iota}(t-s)Eu(s)ds\bigg\|^p +6^{p-1}\mathbb{E}\bigg\|\int_{e_q}^{t}\mathcal{J}_{\alpha, \gamma_\iota}(t-s) g_1\big(s,z(s)\big)ds\bigg\|^p \\&&\quad+6^{p-1}\mathbb{E}\bigg\|\int_{e_q}^{t}\mathcal{J}_{\alpha, \gamma_\iota}(t-s)g_2\big(s,z(s)\big)d\upsilon(s)\bigg\|^p\\&\leq& 6^{p-1}S^p\tilde{a}_q\big(1+\mathbb{E}\|z(t_q)\|^p\big)+6^{p-1}{t_{q+1}}S^p\tilde{b}_q\big(1+\mathbb{E}\|z(t_q)\|^p\big)\\&&\quad +6^{p-1}\sum \limits_{\iota=1}^{m}\beta_{\iota}\bigg(\frac{S}{\Gamma(1+\alpha-\gamma_{\iota})}\bigg)^p\frac{1}{(1+p \alpha-p \gamma_{\iota})}{t^{(1+\alpha-\gamma_{\iota})p}_{q+1}}\tilde{a}_q\big(1+\mathbb{E}\|z(t_q)\|^p\big)\\&&\quad+6^{p-1}\bigg(\frac{S}{\Gamma(1+\alpha)}\bigg)^p\bigg(\frac{p-1}{\alpha p+p-1}\bigg)^p{t^{\alpha p+p-1}_{q+1}}\|E\|^p\|u\|^p_{L_{\Upsilon}^p}\\&&\quad+6^{p-1}\bigg(\frac{S}{\Gamma(1+\alpha)}\bigg)^p\bigg(\frac{p-1}{\alpha p+p-1}\bigg)^p{t^{\alpha p+p-1}_{q+1}}\int_{0}^{t}\mu_1(s)\mathbb{E}\|z(s)\|^pds\\&&\quad+ 6^{p-1}C_p\bigg(\frac{S}{\Gamma(1+\alpha)}\bigg)^p\bigg(\frac{p-2}{2\alpha p+p-2}\bigg)^{\frac{p-2}{p}}{t^{\frac{2\alpha p+p-2}{2}}_{q+1}}\int_{0}^{t}\mu_2(s)\mathbb{E}\|z(s)\|^pds\\&\leq& 6^{p-1}S^p\tilde{a}_q+6^{p-1}{t_{q+1}}S^p\tilde{b}_q+6^{p-1}\sum \limits_{\iota=1}^{m}\beta_{\iota}\bigg(\frac{S}{\Gamma(1+\alpha-\gamma_{\iota})}\bigg)^p\frac{1}{(1+p \alpha-p \gamma_{\iota})}{t^{(1+\alpha-\gamma_{\iota})p}_{q+1}}\tilde{a}_q\\&&\quad+6^{p-1}\bigg(\frac{S}{\Gamma(1+\alpha)}\bigg)^p\bigg(\frac{p-1}{\alpha p+p-1}\bigg)^p{t^{\alpha p+p-1}_{q+1}}\|E\|^p\|u\|^p_{L_{\Upsilon}^p}\\&&\quad+6^{p-1}\bigg(\frac{S}{\Gamma(1+\alpha)}\bigg)^p\bigg\{\bigg(\frac{p-1}{\alpha p+p-1}\bigg)^p{t^{\alpha p+p-1}_{q+1}}+\bigg(\frac{p-2}{2\alpha p+p-2}\bigg)^{\frac{p-2}{p}}{t^{\frac{2\alpha p+p-2}{2}}_{q+1}}\bigg\}\\&&\quad\times \int_{0}^{t}\big(\mu_1(s)+\mu_2(s)\big)\mathbb{E}\|z(s)\|^pds+\sum\limits_{k=1}^{q}6^{p-1}S^p\bigg[\tilde{a}_k+{t_{q+1}}\tilde{b}_k\\&&\quad+\sum \limits_{\iota=1}^{m}\beta_{\iota}\bigg(\frac{1}{\Gamma(1+\alpha-\gamma_{\iota})}\bigg)^p\frac{1}{(1+p \alpha-p \gamma_{\iota})}{t^{(1+\alpha-\gamma_{\iota})p}_{q+1}}\tilde{a}_k\bigg]\mathbb{E}\|z(t_k)\|^p.
\end{eqnarray*}
Using impulsive Gronwall's inequality (Theorem 16.1, \cite{DP}), we get
\begin{eqnarray*}
	\mathbb{E}\|z(t)\|^p&\leq&\bigg\{6^{p-1}S^p\tilde{a}_q+6^{p-1}{t_{q+1}}S^p\tilde{b}_q+6^{p-1}\sum \limits_{\iota=1}^{m}\beta_{\iota}\bigg(\frac{S}{\Gamma(1+\alpha-\gamma_{\iota})}\bigg)^p\frac{1}{(1+p \alpha-p \gamma_{\iota})}{t^{(1+\alpha-\gamma_{\iota})p}_{q+1}}\tilde{a}_q\\&&\quad+6^{p-1}\bigg(\frac{S}{\Gamma(1+\alpha)}\bigg)^p\bigg(\frac{p-1}{\alpha p+p-1}\bigg)^p{t^{\alpha p+p-1}_{q+1}}\|E\|^p\|u\|^p_{L_{\Upsilon}^p}\bigg\}\\&&\quad \times (1+\tilde{\pounds})^q\exp\bigg[6^{p-1}\bigg(\frac{S}{\Gamma(1+\alpha)}\bigg)^p\bigg\{\bigg(\frac{p-1}{\alpha p+p-1}\bigg)^p{t^{\alpha p+p-1}_{q+1}}\\&&\quad+\bigg(\frac{p-2}{2\alpha p+p-2}\bigg)^{\frac{p-2}{p}}{t^{\frac{2\alpha p+p-2}{2}}_{q+1}}\bigg\}\|\mu_1(s)+\mu_2(s)\|_{L^1}\bigg]\\&\leq&\Delta,
\end{eqnarray*}
where $\tilde{\pounds}=\sup\limits_{1\leq k\leq r}6^{p-1}S^p\bigg[\tilde{a}_k+{t_{q+1}}\tilde{b}_k+\sum \limits_{\iota=1}^{m}\beta_{\iota}\bigg(\frac{1}{\Gamma(1+\alpha-\gamma_{\iota})}\bigg)^p\frac{1}{(1+p \alpha-p \gamma_{\iota})}{t^{(1+\alpha-\gamma_{\iota})p}_{q+1}}\tilde{a}_k\bigg].$\\
From the above, we have
$$\|z\|^p_{\mathcal{PC}}\leq \Delta.$$
\end{proof}
\begin{theorem} \label{th4} Assume that the assumptions (C1)-(C5) are satisfied. Then the Lagrange problem (\ref{05.1}) admits at least one optimal state-control pair.
\end{theorem}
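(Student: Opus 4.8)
The plan is to use the standard direct method of the calculus of variations, adapted to the stochastic setting. First I would dispose of the trivial case: if $\mathcal{I}(z^u,u)=+\infty$ for every admissible $u$, there is nothing to prove, so I assume $\inf\{\mathcal{I}(z^u,u):u\in U_{ad}\}=:\mathfrak{m}<+\infty$. By (C5)(c) together with the bound $\mathcal{I}(z^u,u)\ge \mathbb{E}\int_0^\ell \Phi(t)\,dt + h_1\mathbb{E}\int_0^\ell\|z^u\|^p\,dt + h_2\|u\|^p_{L^p_\Upsilon}$ and $\Phi\in L^1$, the infimum $\mathfrak{m}$ is finite (it is bounded below). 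Then I would pick a minimizing sequence $\{(z^n,u^n)\}\subset \mathcal{PC(Z)}\times U_{ad}$ with $z^n=z^{u^n}$ and $\mathcal{I}(z^n,u^n)\to\mathfrak{m}$.

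The next step is to extract convergent subsequences. Since $h_2>0$ and $\mathcal{I}(z^n,u^n)$ is bounded, the coercivity estimate gives $\|u^n\|^p_{L^p_\Upsilon}$ bounded, hence $\{u^n\}$ is bounded in the reflexive Banach space $L^p_\Upsilon([0,\ell],\mathcal{U})$ (for $p\ge 2$); passing to a subsequence, $u^n\rightharpoonup u^*$ weakly. Because $U_{ad}$ is closed, bounded and convex (in particular weakly closed by Mazur's theorem), $u^*\in U_{ad}$. By Lemma~\ref{3.1}, $\|z^n\|^p_{\mathcal{PC}}\le\Delta$ uniformly, and by Steps~3--5 of the proof of Theorem~\ref{th1} the relevant solution map is compact (the operator $\mathcal F_2$ is completely continuous while $\mathcal F_1$ is a contraction), so one can show $\{z^n\}$ is relatively compact in $\mathcal{PC(Z)}$ — equicontinuity on each subinterval plus relative compactness of the sections $\mathcal{V}(t)$, exactly as in Step~4 and Step~5, applied to the sequence $(z^n,u^n)$. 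Hence, along a further subsequence, $z^n\to z^*$ in $\mathcal{PC(Z)}$.

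Then I would identify $z^*$ as the mild solution corresponding to $u^*$, i.e. $z^*=z^{u^*}\in\mathfrak U_z(u^*)$. This is the passage-to-the-limit step in the integral equation \eqref{2.1}: on each interval $(e_q,t_{q+1}]$ one lets $n\to\infty$ in $z^n(t)=\mathcal S_{\alpha,\gamma_\iota}(\cdot)\varsigma_q(\cdots)+\cdots+\int_{e_q}^t \mathcal J_{\alpha,\gamma_\iota}(t-s)Eu^n(s)\,ds+\int_{e_q}^t\mathcal J_{\alpha,\gamma_\iota}(t-s)g_1(s,z^n(s))\,ds+\int_{e_q}^t\mathcal J_{\alpha,\gamma_\iota}(t-s)g_2(s,z^n(s))\,dv(s)$. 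The terms containing $g_1,g_2$ and the impulses converge by continuity (C2)(a), (C3) and the dominated convergence theorem together with Lemma~\ref{lm1}, using the uniform bound $\Delta$; the control term $\int_{e_q}^t\mathcal J_{\alpha,\gamma_\iota}(t-s)Eu^n(s)\,ds\to\int_{e_q}^t\mathcal J_{\alpha,\gamma_\iota}(t-s)Eu^*(s)\,ds$ because, for fixed $t$, $s\mapsto \mathcal J_{\alpha,\gamma_\iota}(t-s)E$ (composed with the dual pairing) defines a bounded linear functional and $u^n\rightharpoonup u^*$; compactness of $\mathcal J_{\alpha,\gamma_\iota}$ upgrades this weak convergence to strong convergence of the integral in $\mathcal{Z}$. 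Thus $z^*$ satisfies \eqref{2.1} with control $u^*$, so $(z^*,u^*)\in\mathcal{PC(Z)}\times U_{ad}$ is an admissible state-control pair.

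Finally I would invoke lower semicontinuity of the cost. By (C5)(a)--(b), $\tilde{\mathcal G}(t,\cdot,\cdot)$ is convex in the control variable and sequentially lower semicontinuous, which by the Balder-type theorem on lower semicontinuity of integral functionals (applied on $[0,\ell]\times\Omega$ with the product measure $dt\otimes dP$, using the strong convergence $z^n\to z^*$ and weak convergence $u^n\rightharpoonup u^*$) yields
\begin{eqnarray*}
\mathcal{I}(z^*,u^*)=\mathbb{E}\int_0^\ell \tilde{\mathcal G}(t,z^*(t),u^*(t))\,dt\le \liminf_{n\to\infty}\mathbb{E}\int_0^\ell \tilde{\mathcal G}(t,z^n(t),u^n(t))\,dt=\liminf_{n\to\infty}\mathcal{I}(z^n,u^n)=\mathfrak{m}.
\end{eqnarray*}
Since $\mathfrak{m}$ is the infimum and $(z^*,u^*)$ is admissible, $\mathcal{I}(z^*,u^*)=\mathfrak{m}$, so $(z^*,u^*)$ is an optimal state-control pair. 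I expect the main obstacle to be the passage to the limit in the control term and the application of the lower-semicontinuity theorem: one must justify carefully that weak convergence of $u^n$ in $L^p_\Upsilon$ is compatible with the $\mathbb{E}\int_0^\ell$ integral functional (this is where convexity in $u$ from (C5)(a) is essential, via Mazur/Balder), and that the stochastic integral term passes to the limit — here the uniform moment bound from Lemma~\ref{3.1}, the growth condition (C2)(b) and Lemma~\ref{lm1} do the work, but the argument needs the strong convergence $z^n\to z^*$ established via the compactness in Steps~4--5 rather than mere weak convergence.
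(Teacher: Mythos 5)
Your proposal is correct and follows essentially the same route as the paper: the direct method with a minimizing sequence, boundedness and weak $\mathcal L_{\Upsilon}^p$-compactness of the controls plus Mazur's lemma, relative compactness of the corresponding states via Lemma~\ref{3.1} and the equicontinuity/Arzel\`a--Ascoli arguments of Theorem~\ref{th1}, passage to the limit in the mild-solution formula, and Balder's theorem for lower semicontinuity. The only organizational difference is that you minimize over admissible state-control pairs in a single stage, whereas the paper first shows (its Steps 1--2) that for each fixed $u$ the infimum over the possibly non-singleton solution set $\mathfrak{U}_z(u)$ is attained and then minimizes the resulting value function $\mathcal{I}(u)$ over $U_{ad}$; both treatments account for the non-uniqueness of mild solutions inherent in the Krasnoselskii-based existence result.
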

\begin{proof}
We define $\mathcal{I}(u)=\inf_{z^u\in \mathfrak{U}_z(u)} \mathcal{I}(z^u,u)$ for $u \in U_{ad}.$ If the collection $\mathfrak{U}_z(u)$ has a finite number of elements, there exists some $\hat z^u \in \mathfrak{U}_z(u)$ such that $\mathcal{I}(\hat z^u,u)=\inf_{z^u\in \mathfrak{U}_z(u)}\mathcal{I}(z^u,u)=\mathcal{I}(u).$ If the collection $\mathfrak{U}_z(u)$ admits infinite many elements and $\inf_{z^u\in \mathfrak{U}_z(u)}\mathcal{I}(z^u,u)=+\infty,$ there is nothing to prove. Next, we choose $\mathcal{I}(u)=\inf_{z^u\in \mathfrak{U}_z(u)} \mathcal{I}(z^u,u)<+\infty$ and by using the hypotheses (C5), we get $\mathcal{I}(u)>-\infty.$ The proof is now divided into three steps. \\
Step 1. There exists a minimizing sequence $\{z^u_m\}\subseteq  \mathfrak{U}_z(u)$ such that
$$ \mathcal{I}(z^u_m,u) \rightarrow \mathcal{I}(u)~\mbox{as}~ m \rightarrow \infty.$$
Let $\{z^u_m\}$ represent the sequence of mild solutions of (\ref{1.1}) with respect to $u$
 	\begin{eqnarray*}
 	z^u_m(t)=	\left \{ \begin{array}{lll}
 		\mathcal{S}_{\alpha, \gamma_\iota}(t)z_0+\displaystyle\int_{0}^{t}\mathcal{S}_{\alpha, \gamma_\iota}(s)z_1ds+\sum \limits_{\iota=1}^{m}\beta_{\iota}\displaystyle\int_{0}^{t}\frac{(t-s)^{\alpha-\gamma_{\iota}}}{\Gamma(1+\alpha-\gamma_{\iota})}\mathcal{S}_{\alpha, \gamma_\iota}(s)z_0ds\nonumber\\ +\displaystyle\int_{0}^{t} \mathcal{J}_{\alpha, \gamma_\iota}(t-s)Eu(s)ds +\displaystyle\int_{0}^{t}\mathcal{J}_{\alpha, \gamma_\iota}(t-s) g_1\big(s,z^u_m(s)\big)ds\nonumber \\+\displaystyle\int_{0}^{t}\mathcal{J}_{\alpha, \gamma_\iota}(t-s)g_2\big(s,z^u_m(s)\big)d\upsilon(s), \hspace{3.1cm} t\in[0,t_1], ~q=0,&\\ \varsigma_q\big(t,z^u_m(t_q^-)\big), \hspace{6.4cm} t\in (t_q,e_q], ~q=1,2,\ldots,r,
 		&\\	\mathcal{S}_{\alpha, \gamma_\iota}(t-e_q)\varsigma_q\big(e_q,z^u_m(t_q)\big)+\displaystyle\int_{e_q}^{t}\mathcal{S}_{\alpha, \gamma_\iota}(s-e_q)\varphi_q\big(e_q,z^u_m(t_q)\big)ds\nonumber\\ +\sum \limits_{\iota=1}^{m}\beta_{\iota}\displaystyle\int_{e_q}^{t}\frac{(t-s)^{\alpha-\gamma_{\iota}}}{\Gamma(1+\alpha-\gamma_{\iota})}\mathcal{S}_{\alpha, \gamma_\iota}(s-e_q)\varsigma_q\big(e_q,z^u_m(t_q)\big)ds\nonumber\\+\displaystyle\int_{e_q}^{t} \mathcal{J}_{\alpha, \gamma_\iota}(t-s)Eu(s)ds +\displaystyle\int_{e_q}^{t}\mathcal{J}_{\alpha, \gamma_\iota}(t-s) g_1\big(s,z^u_m(s)\big)ds\nonumber \\+\displaystyle\int_{e_q}^{t}\mathcal{J}_{\alpha, \gamma_\iota}(t-s)g_2\big(s,z^u_m(s)\big)d\upsilon(s),\hspace{2.9cm} t\in(e_q,t_{q+1}], ~q=1,2,\ldots,r.
 	\end{array}\right.
  \end{eqnarray*}
Step 2. Next, we prove that there exists some $\hat z^u\in \mathfrak{U}_z(u) $ such that $\mathcal{I}(\hat z^u,u)=\inf_{z^u\in \mathfrak{U}_z(u)}\mathcal{I}(z^u,u)=\mathcal{I}(u).$ For this, we show that for each $u\in U_{ad}$, the set $\{z^u_m\}_{m\in \mathbb{N}}$ is relatively compact in $\mathcal{PC(Z)}.$ $\{z^u_m\}$ is uniformly bounded according to Lemma ~\ref{3.1}. We now demonstrate that $\{z^u_m\}$ is equicontinuous on $[0,\ell].$  We consider the following three scenarios to achieve our goal:\\
Case 1. For $0<\zeta_1<\zeta_2\leq t_1,$ we obtain
\begin{eqnarray*}
	\mathbb{E}\|z(\zeta_2)-z(\zeta_1)\|^p&\leq& {10}^{p-1}\|\mathcal{S}_{\alpha, \gamma_\iota}(\zeta_2)-\mathcal{S}_{\alpha, \gamma_\iota}(\zeta_1)\|^p\mathbb{E}\|z_0\|^p+{10}^{p-1}\mathbb{E}\bigg\|\displaystyle\int_{\zeta_1}^{\zeta_2}\mathcal{S}_{\alpha, \gamma_\iota}(s)z_1ds\bigg\|^p\\&&\quad+{10}^{p-1}\mathbb{E}\bigg\|\sum \limits_{\iota=1}^{m}\beta_{\iota}\displaystyle\int_{\zeta_1}^{\zeta_2}\frac{(\zeta_2-s)^{\alpha-\gamma_{\iota}}}{\Gamma(1+\alpha-\gamma_{\iota})}\mathcal{S}_{\alpha, \gamma_\iota}(s)z_0ds\bigg\|^p\\&& \quad+{10}^{p-1}\mathbb{E}\bigg\|\sum \limits_{\iota=1}^{m}\beta_{\iota}\displaystyle\int_{0}^{\zeta_1}\frac{[(\zeta_2-s)^{\alpha-\gamma_{\iota}}-(\zeta_1-s)^{\alpha-\gamma_{\iota}}]}{\Gamma(1+\alpha-\gamma_{\iota})}\mathcal{S}_{\alpha, \gamma_\iota}(s)z_0ds\bigg\|^p\\&&\quad+{10}^{p-1}\mathbb{E}\bigg\|\displaystyle\int_{\zeta_1}^{\zeta_2} \mathcal{J}_{\alpha, \gamma_\iota}(\zeta_2-s)Eu(s)ds\bigg\|^p\\&&\quad+{10}^{p-1}\mathbb{E}\bigg\|\displaystyle\int_{0}^{\zeta_1} \big[\mathcal{J}_{\alpha, \gamma_\iota}(\zeta_2-s)-\mathcal{J}_{\alpha, \gamma_\iota}(\zeta_1-s)\big]Eu(s)ds\bigg\|^p \\&&\quad +{10}^{p-1}\mathbb{E}\bigg\|\displaystyle\int_{\zeta_1}^{\zeta_2}\mathcal{J}_{\alpha, \gamma_\iota}(\zeta_2-s) g_1\big(s,z(s)\big)ds\bigg\|^p \\&&\quad+{10}^{p-1}\mathbb{E}\bigg\|\displaystyle\int_{0}^{\zeta_1}\big[\mathcal{J}_{\alpha, \gamma_\iota}(\zeta_2-s)-\mathcal{J}_{\alpha, \gamma_\iota}(\zeta_1-s)\big] g_1\big(s,z(s)\big)ds\bigg\|^p \\&&\quad+{10}^{p-1}\mathbb{E}\bigg\|\displaystyle\int_{\zeta_1}^{\zeta_2}\mathcal{J}_{\alpha, \gamma_\iota}(\zeta_2-s)g_2\big(s,z(s)\big)d\upsilon(s)\bigg\|^p\\&&\quad+{10}^{p-1}\mathbb{E}\bigg\|\displaystyle\int_{0}^{\zeta_1}\big[\mathcal{J}_{\alpha, \gamma_\iota}(\zeta_2-s)-\mathcal{J}_{\alpha, \gamma_\iota}(\zeta_1-s)\big]g_2\big(s,z(s)\big)d\upsilon(s)\bigg\|^p\\&\leq& {10}^{p-1}\bigg[\|\mathcal{S}_{\alpha, \gamma_\iota}(\zeta_2)-\mathcal{S}_{\alpha, \gamma_\iota}(\zeta_1)\|^p\mathbb{E}\|z_0\|^p+(\zeta_2-\zeta_1)S^p\mathbb{E}\big\|z_1\big\|^p\\&&\quad+ \Lambda_1+\Lambda_2+\Lambda_3+\Lambda_4\bigg],
\end{eqnarray*}
where 
\begin{eqnarray*}
	\Lambda_1&=&\mathbb{E}\bigg\|\sum \limits_{\iota=1}^{m}\beta_{\iota}\displaystyle\int_{\zeta_1}^{\zeta_2}\frac{(\zeta_2-s)^{\alpha-\gamma_{\iota}}}{\Gamma(1+\alpha-\gamma_{\iota})}\mathcal{S}_{\alpha, \gamma_\iota}(s)z_0ds\bigg\|^p\\&& \quad+\mathbb{E}\bigg\|\sum \limits_{\iota=1}^{m}\beta_{\iota}\displaystyle\int_{0}^{\zeta_1}\frac{[(\zeta_2-s)^{\alpha-\gamma_{\iota}}-(\zeta_1-s)^{\alpha-\gamma_{\iota}}]}{\Gamma(1+\alpha-\gamma_{\iota})}\mathcal{S}_{\alpha, \gamma_\iota}(s)z_0ds\bigg\|^p\\&\leq&\sum \limits_{\iota=1}^{m}\beta_{\iota}\bigg(\frac{S}{\Gamma(1+\alpha-\gamma_{\iota})}\bigg)^p\frac{1}{(1+p \alpha-p \gamma_{\iota})}{t^{(1+\alpha-\gamma_{\iota})p}_1}\mathbb{E}\|z_0\|^p \\&& \quad+\sum \limits_{\iota=1}^{m}\beta_{\iota}\mathbb{E}\displaystyle\int_{0}^{\zeta_1}\frac{\big\|(\zeta_2-s)^{\alpha-\gamma_{\iota}}-(\zeta_1-s)^{\alpha-\gamma_{\iota}}\big\|^p}{\Gamma(1+\alpha-\gamma_{\iota})}\big\|\mathcal{S}_{\alpha, \gamma_\iota}(s)\big\|^p\|z_0\|^pds,\\
	\Lambda_2&=&\mathbb{E}\bigg\|\displaystyle\int_{\zeta_1}^{\zeta_2} \mathcal{J}_{\alpha, \gamma_\iota}(\zeta_2-s)Eu(s)ds\bigg\|^p\\&&\quad+\mathbb{E}\bigg\|\displaystyle\int_{0}^{\zeta_1} \big[\mathcal{J}_{\alpha, \gamma_\iota}(\zeta_2-s)-\mathcal{J}_{\alpha, \gamma_\iota}(\zeta_1-s)\big]Eu(s)ds\bigg\|^p \\&\leq& \bigg(\frac{S}{\Gamma(1+\alpha)}\bigg)^p\bigg(\frac{p-1}{\alpha p+p-1}\bigg)^p{t^{\alpha p+p-1}_1}\|E\|^p\mathbb{E}\displaystyle\int_{\zeta_1}^{\zeta_2} \|u(s)\|^pds\\&&\quad+\mathbb{E}\displaystyle\int_{0}^{\zeta_1} \big\|\mathcal{J}_{\alpha, \gamma_\iota}(\zeta_2-s)-\mathcal{J}_{\alpha, \gamma_\iota}(\zeta_1-s)\big\|^p\|E\|^p\|u(s)\|^pds,\\
	\Lambda_3&=&\mathbb{E}\bigg\|\displaystyle\int_{\zeta_1}^{\zeta_2}\mathcal{J}_{\alpha, \gamma_\iota}(\zeta_2-s) g_1\big(s,z(s)\big)ds\bigg\|^p \\&&\quad+\mathbb{E}\bigg\|\displaystyle\int_{0}^{\zeta_1}\big[\mathcal{J}_{\alpha, \gamma_\iota}(\zeta_2-s)-\mathcal{J}_{\alpha, \gamma_\iota}(\zeta_1-s)\big] g_1\big(s,z(s)\big)ds\bigg\|^p\\&\leq&\bigg(\frac{S}{\Gamma(1+\alpha)}\bigg)^p\bigg(\frac{p-1}{\alpha p+p-1}\bigg)^p{t^{\alpha p+p-1}_1}\Delta\displaystyle\int_{\zeta_1}^{\zeta_2}\mu_1(s)ds\\&&\quad+\Delta\displaystyle\int_{0}^{\zeta_1}\big\|\mathcal{J}_{\alpha, \gamma_\iota}(\zeta_2-s)-\mathcal{J}_{\alpha, \gamma_\iota}(\zeta_1-s)\big\|^p\mu_1(s)ds,\\
	\Lambda_4&=&\mathbb{E}\bigg\|\displaystyle\int_{\zeta_1}^{\zeta_2}\mathcal{J}_{\alpha, \gamma_\iota}(\zeta_2-s)g_2\big(s,z(s)\big)d\upsilon(s)\bigg\|^p\\&&\quad+\mathbb{E}\bigg\|\displaystyle\int_{0}^{\zeta_1}\big[\mathcal{J}_{\alpha, \gamma_\iota}(\zeta_2-s)-\mathcal{J}_{\alpha, \gamma_\iota}(\zeta_1-s)\big]g_2\big(s,z(s)\big)d\upsilon(s)\bigg\|^p\\&\leq& C_p\bigg(\frac{S}{\Gamma(1+\alpha)}\bigg)^p\bigg(\frac{p-2}{2\alpha p+p-2}\bigg)^{\frac{p-2}{p}}{t^{\frac{2\alpha p+p-2}{2}}_1}\Delta\displaystyle\int_{\zeta_1}^{\zeta_2}\mu_2(s)ds\\&&\quad+C_p\Delta\displaystyle\int_{0}^{\zeta_1}\big\|\mathcal{J}_{\alpha, \gamma_\iota}(\zeta_2-s)-\mathcal{J}_{\alpha, \gamma_\iota}(\zeta_1-s)\big\|^p\mu_2(s)ds.
\end{eqnarray*}
Case 2. For $t_q<\zeta_1<\zeta_2\leq e_q,~q =1,2,\ldots,r,$ we obtain
\begin{eqnarray*}
\mathbb{E}\|z(\zeta_2)-z(\zeta_1)\|^p=	\mathbb{E}\|\varsigma_q\big(\zeta_2,z(t_q^-)\big)-\varsigma_q\big(\zeta_1,z(t_q^-)\big)\|^p.
\end{eqnarray*}
Case 3. For $e_q<\zeta_1<\zeta_2\leq t_{q+1},~q =1,2,\ldots,r,$ we obtain\\
$\mathbb{E}\|z(\zeta_2)-z(\zeta_1)\|^p$
\begin{eqnarray*}
	&\leq& 	{10}^{p-1}\|\mathcal{S}_{\alpha, \gamma_\iota}(\zeta_2-e_q)-\mathcal{S}_{\alpha, \gamma_\iota}(\zeta_1-e_q)\|^p\mathbb{E}\|\varsigma_q\big(e_q,z(t_q)\big)\|^p\\&&\quad+{10}^{p-1}\mathbb{E}\bigg\|\displaystyle\int_{\zeta_1}^{\zeta_2}\mathcal{S}_{\alpha, \gamma_\iota}(s-e_q)\varphi_q\big(e_q,z(t_q)\big)ds\bigg\|^p\\&&\quad +{10}^{p-1}\mathbb{E}\bigg\|\sum \limits_{\iota=1}^{m}\beta_{\iota}\displaystyle\int_{\zeta_1}^{\zeta_2}\frac{(\zeta_2-s)^{\alpha-\gamma_{\iota}}}{\Gamma(1+\alpha-\gamma_{\iota})}\mathcal{S}_{\alpha, \gamma_\iota}(s-e_q)\varsigma_q\big(e_q,z(t_q)\big)ds\bigg\|^p\\&&\quad +{10}^{p-1}\mathbb{E}\bigg\|\sum \limits_{\iota=1}^{m}\beta_{\iota}\displaystyle\int_{e_q}^{\zeta_1}\frac{[(\zeta_2-s)^{\alpha-\gamma_{\iota}}-(\zeta_1-s)^{\alpha-\gamma_{\iota}}]}{\Gamma(1+\alpha-\gamma_{\iota})}\mathcal{S}_{\alpha, \gamma_\iota}(s-e_q)\varsigma_q\big(e_q,z(t_q)\big)ds\bigg\|^p\\&& \quad+{10}^{p-1}\mathbb{E}\bigg\|\displaystyle\int_{\zeta_1}^{\zeta_2} \mathcal{J}_{\alpha, \gamma_\iota}(\zeta_2-s)Eu(s)ds\bigg\|^p\\&& \quad+{10}^{p-1}\mathbb{E}\bigg\|\displaystyle\int_{e_q}^{\zeta_1} [\mathcal{J}_{\alpha, \gamma_\iota}(\zeta_2-s)-\mathcal{J}_{\alpha, \gamma_\iota}(\zeta_1-s) ]Eu(s)ds\bigg\|^p\\&&\quad +{10}^{p-1}\mathbb{E}\bigg\|\displaystyle\int_{\zeta_1}^{\zeta_2} \mathcal{J}_{\alpha, \gamma_\iota}(\zeta_2-s)g_1\big(s,z(s)\big)ds\bigg\|^p\\&&\quad+{10}^{p-1}\mathbb{E}\bigg\|\displaystyle\int_{e_q}^{\zeta_1}[\mathcal{J}_{\alpha, \gamma_\iota}(\zeta_2-s)-\mathcal{J}_{\alpha, \gamma_\iota}(\zeta_1-s) ] g_1\big(s,z(s)\big)ds\bigg\|^p \\&&\quad+{10}^{p-1}\mathbb{E}\bigg\|\displaystyle\int_{\zeta_1}^{\zeta_2}\mathcal{J}_{\alpha, \gamma_\iota}(\zeta_2-s)g_2\big(s,z(s)\big)d\upsilon(s)\bigg\|^p\\&&\quad+{10}^{p-1}\mathbb{E}\bigg\|\displaystyle\int_{e_q}^{\zeta_1}[\mathcal{J}_{\alpha, \gamma_\iota}(\zeta_2-s)-\mathcal{J}_{\alpha, \gamma_\iota}(\zeta_1-s) ] g_2\big(s,z(s)\big)d\upsilon(s)\bigg\|^p\\	&\leq& 	{10}^{p-1}\bigg[\|\mathcal{S}_{\alpha, \gamma_\iota}(\zeta_2-e_q)-\mathcal{S}_{\alpha, \gamma_\iota}(\zeta_1-e_q)\|^p\mathbb{E}\|\varsigma_q\big(e_q,z(t_q)\big)\|^p+(\zeta_2-\zeta_1)S^p\tilde{b_q}(1+\Delta)\\&&\quad+\Lambda'_1+\Lambda'_2+\Lambda'_3+\Lambda'_4\bigg],
	\end{eqnarray*}
where 
\begin{eqnarray*}
	\Lambda'_1&=&\mathbb{E}\bigg\|\sum \limits_{\iota=1}^{m}\beta_{\iota}\displaystyle\int_{\zeta_1}^{\zeta_2}\frac{(\zeta_2-s)^{\alpha-\gamma_{\iota}}}{\Gamma(1+\alpha-\gamma_{\iota})}\mathcal{S}_{\alpha, \gamma_\iota}(s-e_q)\varsigma_q\big(e_q,z(t_q)\big)ds\bigg\|^p\\&&\quad +\mathbb{E}\bigg\|\sum \limits_{\iota=1}^{m}\beta_{\iota}\displaystyle\int_{e_q}^{\zeta_1}\frac{[(\zeta_2-s)^{\alpha-\gamma_{\iota}}-(\zeta_1-s)^{\alpha-\gamma_{\iota}}]}{\Gamma(1+\alpha-\gamma_{\iota})}\mathcal{S}_{\alpha, \gamma_\iota}(s-e_q)\varsigma_q\big(e_q,z(t_q)\big)ds\bigg\|^p\\&\leq&\sum \limits_{\iota=1}^{m}\beta_{\iota}\bigg(\frac{S}{\Gamma(1+\alpha-\gamma_{\iota})}\bigg)^p\frac{1}{(1+p \alpha-p \gamma_{\iota})}{t^{(1+\alpha-\gamma_{\iota})p}_{q+1}}\tilde{a_q}(1+\Delta)\\&& \quad+S^p\tilde{b_q}(1+\Delta)\sum \limits_{\iota=1}^{m}\beta_{\iota}\mathbb{E}\displaystyle\int_{e_q}^{\zeta_1}\frac{\big\|(\zeta_2-s)^{\alpha-\gamma_{\iota}}-(\zeta_1-s)^{\alpha-\gamma_{\iota}}\big\|^p}{\Gamma(1+\alpha-\gamma_{\iota})}ds,\\
	\Lambda'_2&=&\mathbb{E}\bigg\|\displaystyle\int_{\zeta_1}^{\zeta_2} \mathcal{J}_{\alpha, \gamma_\iota}(\zeta_2-s)Eu(s)ds\bigg\|^p\\&&\quad+\mathbb{E}\bigg\|\displaystyle\int_{e_q}^{\zeta_1} \big[\mathcal{J}_{\alpha, \gamma_\iota}(\zeta_2-s)-\mathcal{J}_{\alpha, \gamma_\iota}(\zeta_1-s)\big]Eu(s)ds\bigg\|^p \\&\leq&\bigg(\frac{S}{\Gamma(1+\alpha)}\bigg)^p\bigg(\frac{p-1}{\alpha p+p-1}\bigg)^p{t^{\alpha p+p-1}_{q+1}}\|E\|^p\mathbb{E}\displaystyle\int_{\zeta_1}^{\zeta_2} \|u(s)\|^pds\\&&\quad+\|E\|^p\mathbb{E}\displaystyle\int_{e_q}^{\zeta_1} \big\|\mathcal{J}_{\alpha, \gamma_\iota}(\zeta_2-s)-\mathcal{J}_{\alpha, \gamma_\iota}(\zeta_1-s)\big\|^p\|u(s)\|^pds,\\
	\Lambda'_3&=&\mathbb{E}\bigg\|\displaystyle\int_{\zeta_1}^{\zeta_2}\mathcal{J}_{\alpha, \gamma_\iota}(\zeta_2-s) g_1\big(s,z(s)\big)ds\bigg\|^p \\&&\quad+\mathbb{E}\bigg\|\displaystyle\int_{e_q}^{\zeta_1}\big[\mathcal{J}_{\alpha, \gamma_\iota}(\zeta_2-s)-\mathcal{J}_{\alpha, \gamma_\iota}(\zeta_1-s)\big] g_1\big(s,z(s)\big)ds\bigg\|^p\\&\leq&\bigg(\frac{S}{\Gamma(1+\alpha)}\bigg)^p\bigg(\frac{p-1}{\alpha p+p-1}\bigg)^p{t^{\alpha p+p-1}_{q+1}}\Delta\displaystyle\int_{\zeta_1}^{\zeta_2}\mu_1(s)ds\\&&\quad+\Delta\displaystyle\int_{e_q}^{\zeta_1}\big\|\mathcal{J}_{\alpha, \gamma_\iota}(\zeta_2-s)-\mathcal{J}_{\alpha, \gamma_\iota}(\zeta_1-s)\big\|^p\mu_1(s)ds,\\
	\Lambda'_4&=&\mathbb{E}\bigg\|\displaystyle\int_{\zeta_1}^{\zeta_2}\mathcal{J}_{\alpha, \gamma_\iota}(\zeta_2-s)g_2\big(s,z(s)\big)d\upsilon(s)\bigg\|^p\\&&\quad+\mathbb{E}\bigg\|\displaystyle\int_{e_q}^{\zeta_1}\big[\mathcal{J}_{\alpha, \gamma_\iota}(\zeta_2-s)-\mathcal{J}_{\alpha, \gamma_\iota}(\zeta_1-s)\big]g_2\big(s,z(s)\big)d\upsilon(s)\bigg\|^p\\&\leq& C_p\bigg(\frac{S}{\Gamma(1+\alpha)}\bigg)^p\bigg(\frac{p-2}{2\alpha p+p-2}\bigg)^{\frac{p-2}{p}}{t^{\frac{2\alpha p+p-2}{2}}_{q+1}}\Delta\displaystyle\int_{\zeta_1}^{\zeta_2}\mu_2(s)ds\\&&\quad+C_p\Delta\displaystyle\int_{e_q}^{\zeta_1}\big\|\mathcal{J}_{\alpha, \gamma_\iota}(\zeta_2-s)-\mathcal{J}_{\alpha, \gamma_\iota}(\zeta_1-s)\big\|^p\mu_2(s)ds.
\end{eqnarray*}
Using the hypotheses (C1) and continuity of $\varsigma_q$, we can see that the right-hand side of $\Lambda'_i,~i=1,2,3,4$, tends to zero as $\zeta_2\rightarrow \zeta_1.$ Hence,  $\{z^u_m\}$ is equicontinuous on $[0,\ell].$ Similar to the proof of steps 4 and 5 in Theorem~\ref{th1}, we can easily see that $\{z^u_m\}$ is relatively compact on $\mathcal{PC(Z)}.$ As a result, for $u \in U_{ad}$ there exists $\hat z^u \in \mathcal{PC(Z)}$ such that $z^u_m \rightarrow \hat z^u$ in $\mathcal{PC(Z)}.$ Applying $m \rightarrow \infty,$ we obtain\\
\begin{eqnarray*}
\hat z^u(t)=	\left \{ \begin{array}{lll}
		\mathcal{S}_{\alpha, \gamma_\iota}(t)z_0+\displaystyle\int_{0}^{t}\mathcal{S}_{\alpha, \gamma_\iota}(s)z_1ds+\sum \limits_{\iota=1}^{m}\beta_{\iota}\displaystyle\int_{0}^{t}\frac{(t-s)^{\alpha-\gamma_{\iota}}}{\Gamma(1+\alpha-\gamma_{\iota})}\mathcal{S}_{\alpha, \gamma_\iota}(s)z_0ds\nonumber\\ +\displaystyle\int_{0}^{t} \mathcal{J}_{\alpha, \gamma_\iota}(t-s)Eu(s)ds +\displaystyle\int_{0}^{t}\mathcal{J}_{\alpha, \gamma_\iota}(t-s) g_1\big(s,\hat z^u(s)\big)ds\nonumber \\+\displaystyle\int_{0}^{t}\mathcal{J}_{\alpha, \gamma_\iota}(t-s)g_2\big(s,\hat z^u(s)\big)d\upsilon(s), \hspace{3.1cm} t\in[0,t_1], ~q=0,&\\ \varsigma_q\big(t,\hat z^u(t_q^-)\big), \hspace{6.5cm} t\in (t_q,e_q], ~q=1,2,\ldots,r,
		&\\	\mathcal{S}_{\alpha, \gamma_\iota}(t-e_q)\varsigma_q\big(e_q,\hat z^u(t_q)\big)+\displaystyle\int_{e_q}^{t}\mathcal{S}_{\alpha, \gamma_\iota}(s-e_q)\varphi_q\big(e_q,\hat z^u(t_q)\big)ds\nonumber\\ +\sum \limits_{\iota=1}^{m}\beta_{\iota}\displaystyle\int_{e_q}^{t}\frac{(t-s)^{\alpha-\gamma_{\iota}}}{\Gamma(1+\alpha-\gamma_{\iota})}\mathcal{S}_{\alpha, \gamma_\iota}(s-e_q)\varsigma_q\big(e_q, \hat z^u(t_q)\big)ds\nonumber\\+\displaystyle\int_{e_q}^{t} \mathcal{J}_{\alpha, \gamma_\iota}(t-s)Eu(s)ds +\displaystyle\int_{e_q}^{t}\mathcal{J}_{\alpha, \gamma_\iota}(t-s) g_1\big(s,\hat z^u(s)\big)ds\nonumber \\+\displaystyle\int_{e_q}^{t}\mathcal{J}_{\alpha, \gamma_\iota}(t-s)g_2\big(s,\hat z^u(s)\big)d\upsilon(s),\hspace{2.9cm} t\in(e_q,t_{q+1}], ~q=1,2,\ldots,r,
	\end{array}\right.
\end{eqnarray*}
which implies that $\hat z^u\in \mathfrak{U}_z(u).$ Now, we claim that $\mathcal{I}(\hat z^u,u)=\inf_{z^u\in \mathfrak{U}_z(u)}\mathcal{I}(z^u,u)=\mathcal{I}(u).$ By using the Balder's theorem (see Theorem 2.1, \cite{e1}) and hypotheses (C5), we get
	\begin{eqnarray*}
\mathcal{I}(u)&=&\lim \limits_{m \rightarrow \infty} \mathbb{E} \bigg\{\int_{0}^{\ell}\tilde{\mathcal{G}}\big(t,z^u_m(t),u(t)\big)dt\bigg\} \\
	& \geq & \mathbb{E} \bigg\{\int_{0}^{\ell}\tilde{\mathcal{G}}\big(t,\hat z^u(t),u(t)\big)dt\bigg\}\\
	& =& \mathcal{I}(\hat z^u,u) \geq \mathcal{I}(u),
\end{eqnarray*} 
which yields that $\mathcal{I}(u)$ attains its minimum at $\hat z^u \in \mathcal{PC(Z)}$ for each $u \in U_{ad}.$\\
Step 3. Next, we show that there exists $u^*\in U_{ad}$ such that $$\mathcal{I}(u^*)\leq \mathcal{I}(u)~\mbox{for all}~ u \in U_{ad}.$$
If $\inf_{u \in U_{ad}}\mathcal{I}(u)=+\infty,$ there is nothing to prove. Now, we suppose that $\inf_{u \in U_{ad}}\mathcal{I}(u)<+\infty.$ Using the hypotheses (C5) again, we obtain $\inf_{u \in U_{ad}}\mathcal{I}(u)>-\infty.$ By definition of infimum, there exists a minimizing sequence $\{u_m\}\subseteq U_{ad}$ such that
$$\mathcal{I}(u_m)\rightarrow \inf_{u\in U_{ad}} \mathcal{I}(u)~\mbox{as} ~ m \rightarrow +\infty.$$
Since $\{u_m\}\subseteq U_{ad},~\{u_m\}$ is bounded in $\mathcal L_{\Upsilon}^p([0,\ell],\mathcal{U}),$ there exists a subsequence, still denoted by $\{u_m\}$ and $u^*\in \mathcal L_{\Upsilon}^p([0,\ell],\mathcal{U})$ such that $u_m$ converges weakly to $u^*$ in $\mathcal L_{\Upsilon}^p([0,\ell],\mathcal{U})$ as $m \rightarrow +\infty.$ Since $U_{ad}$ is convex and closed, then by Marzur Lemma, $u^*\in U_{ad}.$ Let $\{\hat z^{u_m}\}$ represent the sequence of mild solutions of (\ref{1.1}) with respect to $\{u_m\}$
\begin{eqnarray*}
	\hat z^{u_m}(t)=	\left \{ \begin{array}{lll}
		\mathcal{S}_{\alpha, \gamma_\iota}(t)z_0+\displaystyle\int_{0}^{t}\mathcal{S}_{\alpha, \gamma_\iota}(s)z_1ds+\sum \limits_{\iota=1}^{m}\beta_{\iota}\displaystyle\int_{0}^{t}\frac{(t-s)^{\alpha-\gamma_{\iota}}}{\Gamma(1+\alpha-\gamma_{\iota})}\mathcal{S}_{\alpha, \gamma_\iota}(s)z_0ds\nonumber\\ +\displaystyle\int_{0}^{t} \mathcal{J}_{\alpha, \gamma_\iota}(t-s)Eu_m(s)ds +\displaystyle\int_{0}^{t}\mathcal{J}_{\alpha, \gamma_\iota}(t-s) g_1\big(s,\hat z^{u_m}(s)\big)ds\nonumber \\+\displaystyle\int_{0}^{t}\mathcal{J}_{\alpha, \gamma_\iota}(t-s)g_2\big(s,\hat z^{u_m}(s)\big)d\upsilon(s), \hspace{3.1cm} t\in[0,t_1], ~q=0,&\\ \varsigma_q\big(t,\hat z^{u_m}(t_q^-)\big), \hspace{6.5cm} t\in (t_q,e_q], ~q=1,2,\ldots,r,
		&\\	\mathcal{S}_{\alpha, \gamma_\iota}(t-e_q)\varsigma_q\big(e_q,\hat z^{u_m}(t_q)\big)+\displaystyle\int_{e_q}^{t}\mathcal{S}_{\alpha, \gamma_\iota}(s-e_q)\varphi_q\big(e_q,\hat z^{u_m}(t_q)\big)ds\nonumber\\ +\sum \limits_{\iota=1}^{m}\beta_{\iota}\displaystyle\int_{e_q}^{t}\frac{(t-s)^{\alpha-\gamma_{\iota}}}{\Gamma(1+\alpha-\gamma_{\iota})}\mathcal{S}_{\alpha, \gamma_\iota}(s-e_q)\varsigma_q\big(e_q, \hat z^{u_m}(t_q)\big)ds\nonumber\\ +\displaystyle\int_{e_q}^{t} \mathcal{J}_{\alpha, \gamma_\iota}(t-s)Eu_m(s)ds +\displaystyle\int_{e_q}^{t}\mathcal{J}_{\alpha, \gamma_\iota}(t-s) g_1\big(s,\hat z^{u_m}(s)\big)ds\nonumber \\+\displaystyle\int_{e_q}^{t}\mathcal{J}_{\alpha, \gamma_\iota}(t-s)g_2\big(s,\hat z^{u_m}(s)\big)d\upsilon(s),\hspace{2.8cm} t\in(e_q,t_{q+1}], ~q=1,2,\ldots,r.
	\end{array}\right.
\end{eqnarray*}
We can easily demonstrate that $\{\hat z^{u_m}\}$ is relatively compact on $\mathcal{PC(Z)}$ similar to the proof in the step 2.  As a result, there exists $\hat z^{u^*} \in \mathcal{PC(Z)}$ such that $\hat z^{u_m} \rightarrow \hat z^{u^*}$ in $\mathcal{PC(Z)}$ for $u^* \in U_{ad}.$ Taking $m \rightarrow \infty,$ we obtain
\begin{eqnarray*}
&&	\varsigma_q\big(t,\hat z^{u_m}(t_q^-)\big)\rightarrow \varsigma_q\big(t,\hat z^{u^*}(t_q^-)\big),~q=1,2,\ldots,r,\\&&\varphi_q\big(t,\hat z^{u_m}(t_q^-)\big)\rightarrow \varphi_q\big(t,\hat z^{u^*}(t_q^-)\big),~q=1,2,\ldots,r,\\&&\displaystyle\int_{e_q}^{t} \mathcal{J}_{\alpha, \gamma_\iota}(t-s)Eu_m(s)ds\rightarrow \displaystyle\int_{e_q}^{t} \mathcal{J}_{\alpha, \gamma_\iota}(t-s)Eu^*(s)ds,~ q=0,1,2,\ldots,r,\\&&
\displaystyle\int_{e_q}^{t}\mathcal{J}_{\alpha, \gamma_\iota}(t-s) g_1\big(s,\hat z^{u_m}(s)\big)ds\rightarrow\displaystyle\int_{e_q}^{t}\mathcal{J}_{\alpha, \gamma_\iota}(t-s) g_1\big(s,\hat z^{u^*}(s)\big)ds,~ q=0,1,2,\ldots,r,\\&&\displaystyle\int_{e_q}^{t}\mathcal{J}_{\alpha, \gamma_\iota}(t-s) g_2\big(s,\hat z^{u_m}(s)\big)d\upsilon(s)\rightarrow\displaystyle\int_{e_q}^{t}\mathcal{J}_{\alpha, \gamma_\iota}(t-s) g_2\big(s,\hat z^{u^*}(s)\big)d\upsilon(s),~ q=0,1,2,\ldots,r.
	\end{eqnarray*}
Hence, $\hat z^{u^*}$ represents the solution of (\ref{1.1}) with respect to $u^*.$
By using the Balder's theorem (see Theorem 2.1, \cite{e1}) and hypotheses (C5), we get
\begin{eqnarray*}
	\inf_{u\in U{ad}}\mathcal{I}(u)&=&\lim \limits_{m \rightarrow \infty} \mathbb{E} \bigg\{\int_{0}^{\ell}\tilde{\mathcal{G}}\big(t,\hat z^{u_m}(t),u_m(t)\big)dt\bigg\} \\
	& \geq & \mathbb{E} \bigg\{\int_{0}^{\ell}\tilde{\mathcal{G}}\big(t,\hat z^{u^*}(t),u^*(t)\big)dt\bigg\}\\
	& =& \mathcal{I}(\hat z^{u^*},u^*) \geq \inf_{u\in U{ad}}\mathcal{I}(u).
\end{eqnarray*} 
Thus, $\mathcal{I}(\hat z^{u^*},u^*)=\mathcal{I}(u^*)=\inf_{z^{u^*}\in \mathfrak{U}_z(u^*)}\mathcal{I}(z^{u^*},u^*).$ Moreover, $\mathcal{I}(u^*)=\inf_{u\in U_{ad}}\mathcal{I}(u),$ i.e., $\mathcal{I}$ attains its minimum at $u^*\in U_{ad}.$
	\end{proof}

\section{\textbf{Applications}}
\textbf{Example 6.1.} Consider 
\begin{eqnarray}  \label{5.1}
\left \{ \begin{array}{lll} ^cD_{t}^{1+\alpha}z(t,x)+\sum \limits_{i=1}^{m}\beta_i ^cD_{t}^{\gamma_i}z(t,x)= z_{xx}(t,x)+u(t,x)+\dfrac{e^{-3t}z(t,x)}{2+|z(t,x)|}\\\hspace{6cm}+\dfrac{e^{\pi}z(t,x)}{e^{4t}}\dfrac{d\upsilon(t)}{dt},\\
	\hspace{5.5cm}\quad t\in (0,0.20]\cup (0.90,1],\quad u\in U_{ad},\quad x \in [0,\pi],
&\\z(t,x)=\frac{1}{4}\sin (t)z(0.20^-,x), \quad t \in (0.20,0.90],\quad x \in [0,\pi],
&\\z'(t,x)=\frac{1}{3}\sin(t)z(0.20^-,x), \quad t \in (0.20,0.90],\quad x \in [0,\pi],
&\\z(t,0)=z(t,\pi)=0, &\\z(0,x)=z_0(x), \quad \dfrac{\partial z(t,x)}{\partial t}\Big\rvert_{t=0}=z_1(x) , \quad t \in [0,\pi],
\end{array}\right.
\end{eqnarray}
with cost functional as
\begin{eqnarray*}
	\mathcal{I}(z,u)=\int_{0}^{1}\int_{0}^{\pi}\|z^u(t,x)\|^2dxdt+\int_{0}^{1}\int_{0}^{\pi}\|u(t,x)\|^2dxdt,
\end{eqnarray*}
where,  $0<\alpha \leq \gamma_m\leq \cdots \leq \gamma_1\leq 1$ and $\beta_i\geq0$ be given for all $i=1,2,\ldots, m$ and $\upsilon(t)$ is a Wiener process. $0=e_0=t_0<t_1<e_1<t_2=\ell,$ with $t_1=0.20,e_1=0.90,$ and $t_2=1.$  Let $\mathcal{Z}=\mathcal{U}=\mathcal{L}^2[0,\pi],~\mathcal{W}=\mathbb{R},$ and define $A:D(A)\subset \mathcal{Z}\rightarrow \mathcal{Z}$
 by
 $Az=z''$
with $$D(A)=\{z \in \mathcal{Z} : z,z'~ \mbox{are absolutely continuous},~ z'' \in \mathcal{Z}, \mbox{and}~z(0)=z(\pi)=0\}.$$ 
The operator $A$ has discrete spectrum with normalized eigenvectors $\xi_n(x)=\sqrt{\frac{2}{\pi}}\sin (nx) $ corresponding to the eigenvalues $\lambda_n=-n^2,$ where $n \in \mathbb{N}.$ Moreover, $\{\xi_n:n \in \mathbb{N}\}$ forms an orthogonal basis for $\mathcal{Z}.$ Thus, we have $$Az=\sum \limits_{n \in \mathbb{N}}{-n^2} \langle z,\xi_n \rangle \xi_n, ~ z\in D(A).$$ 
$A$ generates a strongly continuous cosine and sine family given by
$$C(t)z=\sum \limits_{n \in \mathbb{N}}\cos(nt)\langle z,\xi_n \rangle \xi_n,$$ and $$S(t)z=\sum \limits_{n \in \mathbb{N}}\frac{1}{n}\sin(nt)\langle z,\xi_n \rangle \xi_n,~z\in \mathcal{Z},$$ respectively. 
From Huan \cite{H}, $z\in \mathcal{Z}$ and $t\in \mathbb{R},$ $\|C(t)\|\leq 1$ and $\|S(t)\|\leq 1.$
In view of Theorem~\ref{1}, for $\alpha \in(0,1)$ and $\gamma_i>0,$ $A$ generates a bounded $(\alpha, \gamma_i)$-resolvent family $$ S_{\alpha, \gamma_i}(t)z=\int_0^\infty \frac{1}{t^{\frac{(1+\alpha)}{2}}}\psi_{\frac{(1+\alpha)}{2}}\Big(st^{-\frac{(1+\alpha)}{2}}\Big)C(s)zds,~~t\in[0,1],$$
where $$\psi_{\frac{(1+\alpha)}{2}}(l)=\sum\limits_{n=0}^{\infty}\frac{(-l)^n}{n!\Gamma(-(\alpha(n+1))-n)},~~l\in\mathbb{C},$$
is the Wright function.

\noindent Define  $U_{ad}=\{u(\cdot,x):[0,1]\rightarrow \mathcal{U} ~\mbox{ is}~ \Upsilon_t\mbox{-adapted and measurable stocahstic processes}\\\mbox{ and} \|u\|_{L^2_{\Upsilon}}\leq \eta, ~\eta>0\}.$ Let $z(t)(x)=z(t,x)$ and $$g_1\big(t,z(t)\big)(x)=\dfrac{e^{-3t}z(t,x)}{2+|z(t,x)|},\quad g_2\big(t,z(t)\big)(x)=\dfrac{e^{\pi}z(t,x)}{e^{4t}},\quad Eu(t)(x)=u(t,x),$$
$$\varsigma_q\big(t,z(t_q^-)\big)(x)=\frac{1}{4}\sin (t)z(0.20^-,x),\quad \varphi_q\big(t,z(t_q^-)\big)(x)= \frac{1}{3}\sin(t)z(0.20^-,x).$$ Now, the system (\ref{5.1}) can be written as in the abstract form (\ref{1.1}) and all the presumptions of Theorem~\ref{th1} and ~\ref{th4} are fulfilled. Thus, the Lagrange problem corresponding to (\ref{5.1}) has at least one optimal state-control pair.

\textbf{Example 6.2.} Consider 
\begin{eqnarray}  \label{5.2}
	\left \{ \begin{array}{lll} ^cD_{t}^{1.2}z(t,x)+^cD_{t}^{0.6}z(t,x)+ 5^cD_{t}^{0.4}z(t,x)+8^cD_{t}^{0.3}z(t,x)= z_{xx}(t,x)+u(t,x)\\+\frac{1}{3}{e^{-t}z(t,x)}+\dfrac{2+e^{-\pi t}z(t,x)}{1+|z(t,x)|}\dfrac{d\upsilon(t)}{dt},\quad t\in (0,0.40]\cup (0.80,1],\quad x \in [0,\pi],
		&\\z(t,x)=\cos(t)z(0.40^-,x), \quad t \in (0.40,0.80],\quad x \in [0,\pi],
		&\\z'(t,x)=\frac{1}{9}\cos(t)z(0.40^-,x), \quad t \in (0.40,0.80],\quad x \in [0,\pi],
		&\\z(t,0)=z(t,\pi)=0, &\\z(0,x)=0, \quad \dfrac{\partial z(t,x)}{\partial t}\Big\rvert_{t=0}=x+2 , \quad t \in [0,\pi],
	\end{array}\right.
\end{eqnarray}
with cost functional as
\begin{eqnarray*}
	\mathcal{I}(z,u)=\int_{0}^{1}\int_{0}^{\pi}\|z^u(t,x)\|^2dxdt+\int_{0}^{1}\int_{0}^{\pi}\|u(t,x)\|^2dxdt.
\end{eqnarray*} 
Following and collecting the discussion of the above example, we can easily achieve the result.

\section{ \textbf{Conclusion}} The present paper studied the existence and optimal control results of a multi-term time-fractional stochastic system with non-instantaneous impulses. We proved the existence of mild solutions by using Krasnoselskii's fixed point theorem. Using the minimizing sequence concept, we obtained that, under some natural assumptions, the problem admits at least one optimal pair of state-control. Finally, the obtained results have been demonstrated by the examples. For further work, one may apply an iterative technique to establish the results of the above-considered problem.

\textbf{{Acknowledgement}}
The first and third authors acknowledge UGC, India, for providing financial support through MANF F.82-27/2019 (SA-III)/ 4453 and F.82-27/2019 (SA-III)/191620066959, respectively.

\textbf{{Declarations of interest:}} None.

\textbf{ Statement of Contributions}\\
\textbf{Asma Afreen:} Conceptualization, Software, Validation, Original draft preparation, Writing- Reviewing and Editing, Visualization. \\
\textbf{Abdur Raheem:} Conceptualization, Methodology, Validation, Investigation, Reviewing, Supervision. \\
\textbf{Areefa Khatoon:} Software, Writing- Reviewing and Editing, Visualization.


\begin{thebibliography}{00} 	
	\bibitem{AMAAM} A. Raheem, Maryam G. Alshehri, A. Afreen, A. Khatoon, Musaad S. Aldhabani, Study on a semilinear fractional stochastic system with multiple delays in control, AIMS Mathematics, 7(7): 12374-12389, 2022. DOI:10.3934/math.2022687
\bibitem{aa} A. Afreen, A. Raheem, A. Khatoon, Controllability of a second-order non-autonomous stochastic semilinear system with several delays in control, Chaos Solitons Fractals, 155:111763, 2022. https://doi.org/10.1016/j.chaos.2021.111763
\bibitem{a11} A.A. Kilbas, H.M. Srivastava and J.J. Trujillo, Theory and applications of fractional differential equations, North-Holland Mathematics Studies, 204, Elsevier, Amsterdam, 2006.
\bibitem{a3} A. Pazy, Semigroups of linear operators and applications to partial differential equations, Springer-Verlag, New York 1983.
\bibitem{DP} D. Bainov, P. Simeonov, Integral inequalities and applications, Dordrecht: Kluwer Academic Publishers Group, 1992.
\bibitem{H} D.D. Huan, On the controllability of nonlocal second-order impulsive neutral stochastic integro-differential equations with infinite delay, Asian J. Control, 17(4):1233-1242, 2015.
\bibitem{e2} E. Alvarez-Pardo, C. Lizama, Mild solutions for multi-term time-fractional differential equations with nonlocal initial conditions, Electron. J. Diff. Equations., 39:1-10, 2014.
\bibitem{e1} E.J. Balder, Necessary and sufficient conditions for L1-strong-weak lower semicontinuity of
	integral functional, Nonlinear Anal.-Theor., 11, 1399-1404, 1987.
	\bibitem{h} H. Ahmed, Conformable fractional stochastic differential equations with control function, Systems Control Lett., 158:105062, 2021. DOI: 10.1016/j.sysconle.2021.105062
\bibitem{i1} I. Podlubny, Fractional differential equations, Academic Press, New York, 1999.
\bibitem{J} J. Zabczyk, Controllability of stochastic linear systems, Systems Control Lett., 1(1):25-31, 1981. DOI: 10.1016/S0167-6911(81)80008-4
\bibitem{k} K. Liu, Stability of Infinite Dimensional Stochastic Differential Equations with Applications, CRC Monographs and Surveys in Pure and Applied Mathematics, Florida, USA: Chapman and Hall, 2006.
\bibitem{k3} K.S. Miller and B. Ross, An introduction to the Fractional Calculus and Fractional differential equations, Wiley-Interscience, New York, 1993.
\bibitem{l1} L.C. Evans., An introduction to stochastic differential equations, American Mathematical Society, Providence, RI, 2013.
\bibitem{6} M. Benchohra, J. Henderson, S. Ntouyas, Impulsive Differential Equations and Inclusions. New York: Hindawi Publishing Corporation, 2006.
\bibitem{r6} R. Chaudhary, V. Singh, D.N. Pandey, Controllability of multi-term time-fractional differential systems with state-dependent delay, J. Appl. Anal., 2020, 26(2):241–255, doi.org/10.1515/jaa-2020-2016.
\bibitem{1} R. Dhayal, M. Malik, Approximate controllability of fractional stochastic differential equations driven by Rosenblatt process with non-instantaneous impulses, Chaos Solitons Fractals, 151, 2021. doi: 10.1016/j.chaos.2021.111292.
\bibitem{r1} R. Dhayal, M. Malik, S. Abbas, A. Debbouche, Optimal controls for second-order stochastic differential
equations driven by mixed-fractional Brownian motion
with impulses, Math. Methods Appl. Sci., 1-18, 2020.
\bibitem{2}  R. Dhayal, M. Malik, S. Abbas, Solvability and optimal controls of non-instantaneous impulsive stochastic fractional differential equation of order $q \in (1 , 2),$ Stochastics, 2020. doi: 10.1080/17442508.2020.1801685. 
\bibitem{r4} R. Patel, A. Shukla, S.S. Jadon, Existence and optimal control problem for semilinear fractional order $(1,2]$ control system, Math Meth Appl Sci. 2020;1-12. https://doi.org/10.1002/mma.6662
\bibitem{rrr} R. Sakthivel, R. Ganesh, Y. Ren, S. M. Anthoni, Approximate controllability of nonlinear fractional dynamical systems, Commun. Nonlinear Sci. Numer. Simul., 18(12):3498-3508, 2013. 
\bibitem{t1} T. Sathiyaraj, J.R. Wang, P. Balasubramaniam, Controllability and optimal control for a class of time-delayed fractional stochastic integro-differential systems, Appl. Math. Optim., 2020, doi.org/10.1007/s00245-020-09716-w. 
\bibitem{v5} V. Keyantuo, C. Lizama, M. Warma, Asymptotic behavior of fractional-order semilinear evolution equations, Differential Integral Equations, 26(7-8), 757-780, 2013.
\bibitem{v4} V. Singh, D.N. Pandey, Controllability of
multi-term time-fractional differential systems, J. Control Decis., 2018, doi: 10.1080/23307706.2018.1495584.
\bibitem{v3}  V. Singh, R. Chaudhary, L.K. Som, Approximate controllability of stochastic differential system with non-Lipschitz conditions, Stochastic Analysis and Applications, 2021, doi: 10.1080/07362994.2021.1930050.
\bibitem{v2} V. Singh, R. Chaudhary, D.N. Pandey, Approximate controllability of second-order non-autonomous stochastic impulsive differential systems, Stoch. Anal. Appl., 39:2, 339-356, 2021.
\bibitem{v6} V. T. Luong, Decay mild solutions for two-term time-fractional differential equations in Banach spaces, J. Fixed Point Theory Appl., 18(2), 417–432, 2016.
\end{thebibliography}
\end{document}